\tikzset{every loop/.style={}}
\newcommand{\var}{\textnormal{var}}
\newcommand{\supp}{\textnormal{supp}}
\newtheorem{theo}{Theorem}
\newtheorem{prop}[theo]{Proposition}
\newtheorem{lem}[theo]{Lemma}
\newtheorem{claim}[theo]{Claim}
\theoremstyle{definition}
\theoremstyle{remark}
\newtheorem*{rem}{Remark}
\renewcommand\p@enumi{\arabic{section}.}
\title[Synchronization versus stability in globally coupled maps]{Synchronization versus stability of the invariant distribution for a class of globally coupled maps}
\author[P\'eter B\'alint, Gerhard Keller, Fanni M. S\'elley and Imre P\'eter T\'oth]{P\'eter B\'alint, Gerhard Keller, Fanni M. S\'elley and Imre P\'eter T\'oth}
\address{Fanni M. S\'elley: Alfr\'ed R\'enyi Institute for Mathematics \\
Hungarian Academy of Sciences \\
13-15 Re\'altanoda u. H-1053 Budapest, Hungary
and
MTA-BME Stochastics Research Group \\
Budapest University of Technology and Economics \\
Egry J\'ozsef u. 1, H-1111 Budapest, Hungary
and
Department of Stochastics, Institute of Mathematics \\
Budapest University of Technology and Economics \\
Egry J\'{o}zsef u. 1, H-1111 Budapest, Hungary.}
\address{Gerhard Keller: Department of Mathematics, University of Erlangen-Nuremberg, Cauerstr. 11, D-91058 Erlangen, Germany}
\address{P\'eter B\'alint and Imre P\'eter T\'oth:
MTA-BME Stochastics Research Group \\
Budapest University of Technology and Economics \\
Egry J\'ozsef u. 1, H-1111 Budapest, Hungary
and
Department of Stochastics, Institute of Mathematics \\
Budapest University of Technology and Economics \\
Egry J\'{o}zsef u. 1, H-1111 Budapest, Hungary.}
\date{\today}
\begin{document}

\begin{abstract}
We study a class of globally coupled maps in the continuum limit, where the individual maps are expanding maps of the circle. The circle maps in question are such that the uncoupled system admits a unique absolutely continuous invariant measure (acim), which is furthermore mixing. Interaction arises in the form of diffusive coupling, which involves a function that is discontinuous on the circle. We show that for sufficiently small coupling strength the coupled map system admits a unique  absolutely continuous invariant distribution, which depends on the coupling strength $\varepsilon$. Furthermore, the invariant density exponentially attracts all initial distributions considered in our framework. We also show that the dependence of the invariant density on the coupling strength $\varepsilon$ is Lipschitz continuous in the BV norm.

When the coupling is sufficiently strong, the limit behavior of the system is more complex. We prove that a wide class of initial measures approach a point mass with support moving chaotically on the circle. This can be interpreted as synchronization in a chaotic state.
\end{abstract}

\maketitle

\let\thefootnote\relax\footnotetext{\emph{AMS subject classification.} 37D50, 37L60, 82C20}
\let\thefootnote\relax\footnotetext{\emph{Key words and phrases.} coupled map systems, synchronization, unique invariant density, mean field models.}

\section{Introduction}

In this paper we investigate a model of globally coupled maps. The precise definition is given in Section~\ref{s:results}; nonetheless, let us summarize here that by a globally coupled map we mean the following setup:
\begin{itemize}
\item The phase space $M$ is a compact metric space, and the state of the system is described by a Borel probability measure $\mu$ on $M$.
\item The dynamics, to be denoted by $F_{\varepsilon,\mu}$, is a composition of two maps $T \circ \Phi_{\varepsilon,\mu}$, where $T:M\to M$ describes the evolution of the individual sites, while $\Phi_{\varepsilon,\mu}:M\to M$ describes the coupling. Here the parameter $\varepsilon\ge 0$ is the coupling strength, and thus $\Phi_{0,\mu}$ is the identity for any $\mu$.
\item If the initial state of the system is given by some probability measure $\mu_0$, then for later times $n\ge 1$ the state of the system is given by $\mu_n$ which is
obtained by pushing forward $\mu_{n-1}$ by the map $F_{\varepsilon,\mu_{n-1}}$.
\end{itemize}

This framework allows a wide range of examples. To narrow down our analysis, let us assume that $M$ is either the interval $[0,1]$ or the circle $\mathbb{T}=\mathbb{R} / \mathbb{Z}$, and that the map $T$ has positive Lyapunov exponent and good ergodic properties with respect to an absolutely continuous invariant measure. Furthermore, we assume that the coupling map is of the form
\begin{equation} \label{couplingG}
\Phi_{\varepsilon,\mu}(x)=x+\varepsilon\cdot G(x,\mu)
 \end{equation}
for some fixed function $G$ such that the dynamics preserves certain classes of measures. In particular, for any $N\ge 1$ the class of measures when $\mu=\frac{1}{N} \sum_{i=1}^N \delta_{x_i}$ for some points $x_1,\dots,x_N\in M$, is preserved, a case that is regarded as a finite system of mean field coupled maps. In turn, the situation when $\mu$ is (Lebesgue-)absolutely continuous can be thought of as the infinite -- more precisely, continuum -- version of the model.

Coupled dynamical systems in general, and globally coupled maps in particular have been extensively studied in the literature. Below we mention the papers that directly motivated our work, more complete lists of references can be found for instance in \cite{koiller2010coupled}, \cite{fernandez2014breaking}, \cite{keller2006uniqueness} and \cite{chazottes2005dynamics}. Our main interest here is to understand how different types of asymptotic phenomena arise in the system depending on the coupling strength $\varepsilon$. In particular, we would like to address the following questions:
\begin{itemize}
\item \textbf{Weak coupling.}
\begin{itemize}
\item Is there some positive $\varepsilon_0$ such that for $\varepsilon\in[0,\varepsilon_0]$ there exists a unique absolutely continuous invariant probability measure $\mu_*=\mu_*(\varepsilon)$? \item Is $\mu_*(\varepsilon)$ and $\mu_*(0)$ close in some sense?
\item Do we have convergence of $\mu_n$ to $\mu_*$, at least for sufficiently regular initial distributions $\mu_0$?
\end{itemize}
\item \textbf{Strong coupling.}
\begin{itemize}
\item Is it true that for larger values of $\varepsilon$ the asymptotic behavior can be strikingly different?
\item In particular, at least for a substantial class of initial distributions $\mu_0$, does $\mu_n$ approach a point mass that is evolved by the chaotic map $T$?
\end{itemize}
\end{itemize}

As by assumption $T$ has good ergodic properties, the weak coupling phenomena mean that for small enough $\varepsilon$ the behavior of the coupled system is analogous to that of the uncoupled system ($\varepsilon=0$). Hence from now on we will refer to this shortly as \textit{stability}. On the other hand, the limit behavior associated to a point mass moving chaotically on $\mathbb{T}$, which is expected to arise for a class of initial distributions when the coupling is sufficiently strong, will be referred to as \textit{synchronization in a chaotic state}. This terminology is borrowed from the literature of coupled map lattices (see eg.~\cite{chazottes2005dynamics}) where synchronization is defined as the analogous phenomena when the diagonal attracts all orbits from an open set of the phase space, which has a direct product structure.

The short summary of our paper is that for the class of models studied here the answer to all of the questions stated above is yes, in the sense formulated in Theorems~\ref{theomain},~\ref{stability} and~\ref{largeeps} below.

Before describing the works that provided a direct motivation for our research, we would like to comment briefly how the above questions can be regarded in the general context of coupled dynamical systems. As already mentioned, this area has an enormous literature, which we definitely do not aim to survey here. In particular, there is a wide range of asymptotic phenomena that may arise depending on the specifics of the coupled dynamical system.
Yet, the two extremes of stability for weak coupling, and synchronization for strong coupling, is a common feature of many of the examples. For instance, in the context of coupled map lattices, the case of small coupling strength can be often treated as the perturbation of the uncoupled system. As a consequence, the sites remain weakly correlated, typically resulting in a unique space-time chaotic phase, which is analogous to the unique absolutely continuous invariant measure in our setting. On the other hand, if the coupling is strong enough, synchronization occurs in the sense that initial conditions are attracted by some constant configurations, in many cases, by the diagonal. The dynamics along the diagonal is then governed by the local map $T$, which has a positive Lyapunov exponent, justifying the terminology of chaotic synchronization. For further discussion of coupled map lattices, we refer to \cite{chazottes2005dynamics}. Although the specifics are quite different, the two extremities of uncorrelated behavior versus synchronization are highly relevant in another popular paradigm for applied dynamics, the Kuramoto model of coupled oscillators (see \cite{dietert2018mathematics} for a recent survey). We emphasize that the asymptotic behavior in coupled map lattices or in the Kuramoto model is much more complex, discussing which is definitely beyond the scope of the present paper. However, it is worth mentioning that our findings are analogous to some key features of these important models.

The question of stability for small $\varepsilon$ in globally coupled maps received some attention in the 1990's when unexpected behavior, often referred to as the violation of the law of large numbers, was observed even for arbitrarily small values of $\varepsilon$, when $T$ was chosen as a quadratic map or a tent map of the interval (\cite{kaneko1990globally}, \cite{kaneko1995remarks}, \cite{ershov1995mean}, \cite{nakagawa1996dominant}). For the description of the violation of the law of large numbers we refer to \cite{keller2000ergodic}, here we only mention that these complicated phenomena definitely rule out the stability for weak coupling scenario as defined above. However, in \cite{keller2000ergodic} it was shown that there is no violation of the law of large numbers, and in particular, there is stability for weak coupling when
\begin{enumerate}
\item[(I)] the map $T$ is a $C^3$ expanding map of the circle $\mathbb{T}=\mathbb{R} / \mathbb{Z}$;
\item[(II)] the coupling $\Phi_{\varepsilon,\mu}$ defined by \eqref{couplingG} is such that
\[
G(x,\mu)=\gamma(x,\bar{\mu}), \quad \bar{\mu}=\int_{\mathbb{T}} F\circ T \text{ d}\mu,
\]
where $\gamma \in C^{2}(\mathbb{T} \times \mathbb{R},\mathbb{R})$ and $F\in C^{2}(\mathbb{T},\mathbb{R})$.
\end{enumerate}

Motivated in part by these results, the authors in \cite{bardet2009stochastically} investigated a class of models where $T$ is the doubling map, which is deformed by a coupling factor $G(x,\mu)=\gamma(x,\bar\mu)$ to a nonlinear piecewise fractional linear map. For this class of examples, which do not literally fit the structure described by \eqref{couplingG}, \cite{bardet2009stochastically} proved that while there is stability for weak coupling, a phase transition analogous to that of the Curie-Weiss model takes place for stronger coupling strength.

A parallel line of investigation was initiated when a class of models was introduced in \cite{koiller2010coupled}, and later studied from the ergodic theory point of view in \cite{fernandez2014breaking} and \cite{selley2016mean}. In these papers $T$ is the \textit{doubling map} of $\mathbb{T}$, while $\Phi_{\varepsilon,\mu}$ represents
\textit{a diffusive coupling on the circle} among the particles. As such, $\Phi_{\varepsilon,\mu}$ is given by the formula (\ref{couplingG}), where
\[
G(x,\mu)=\int_{\mathbb{T}}g(x-y)\text{ d}\mu(y)
\]
for the function $g$ depicted on Figure~\ref{g}, which is \textit{discontinuous on the circle}. As discussed in \cite{fernandez2014breaking} and \cite{selley2016mean}, the discontinuity of $g$ has some important consequences for the behavior of the finite system. In particular it is shown, mathematically for $N=3$ (\cite{fernandez2014breaking} and \cite{selley2016mean}) and $N=4$ (\cite{selley2016}), and numerically for higher values of $N$ (\cite{fernandez2014breaking}),  that a loss of ergodicity takes place for finite system size when the coupling strength is increased beyond a critical value. From a different perspective, in \cite{selley2016mean}  we studied the continuum version of this model as well. In that context we showed that while there is stability for weak coupling (\cite[Theorem 4]{selley2016mean}), yet, when $\varepsilon$ is sufficiently large, for a substantial class of initial distributions, $\mu_n$ approaches a point mass with support moving chaotically on $\mathbb{T}$ (as formulated in \cite[Theorem 5]{selley2016mean}).

In this paper we go beyond the doubling map discussed in \cite{selley2016mean}. The direct motivation
for this is that in that model, stability was, to some extent, prebuilt into the system: the very same (i.e. Lebesgue) measure
was invariant for every coupling strength.
Without such a symmetry, it was not at all clear whether the invariant measure could be stable under
perturbation by the discontinuous $g$. In other words, we are looking for the regularity class of systems where
the phase transition (or possibly a sequence of phase transitions) between stable behavior and synchronization occurs at positive coupling strength -- of which \cite{selley2016mean} only gave an example. Now we are able to demonstrate that the symmetry of the doubling map is not needed, and much less regularity is enough.
In particular, we consider the model of
\cite{selley2016mean}, yet, instead of the doubling map, $T$ is now basically an arbitrary $C^2$ expanding map of the circle (see Section~\ref{s:results} below for the precise formulation). Our main results are stability for weak coupling as formulated in Theorem~\ref{theomain} and ~\ref{stability}, and synchronization in a chaotic state for strong coupling as formulated in Theorem~\ref{largeeps}. Theorem~\ref{theomain} can be regarded as generalization of \cite[Theorem 4]{selley2016mean}, while Theorem~\ref{largeeps} is a generalization of \cite[Theorem 5]{selley2016mean} to this context.
In \cite{selley2016mean} the $\varepsilon$-dependence of the (unique absolutely continuous) invariant measure was not a question, since the measure was always Lebesgue by construction. In the present model this is not the case. Instead,
we prove in Theorem~\ref{stability} that the unique invariant density depends Lipschitz continuously on the coupling strength.

It is important to emphasize that our methods here are quite different from those of \cite{selley2016mean}. There, our arguments were somewhat ad hoc, based on computations that exploited some specific features, in particular the linearity of the doubling map. Here, a more general approach is needed; we apply spectral tools developed in the literature. In that respect, the proof of our Theorem~\ref{theomain} is strongly inspired by \cite{keller2000ergodic}. Yet, we would like to point out an important difference.
The assumptions of \cite{keller2000ergodic}, in particular \cite[Theorem 4]{keller2000ergodic} are summarized in (I) and (II) above. For the model discussed here, the map $T$ is not much different, though it is slightly less regular than assumed in (I). On the other hand, the coupling $\Phi_{\varepsilon,\mu}$ differs considerably from the one defined in assumption (II). It does not depend only on an integral average of $\mu$, but on a more complicated expression involving the function $g$, which \textit{is discontinuous on} $\mathbb{T}$. As mentioned above, the discontinuity of $g$ has some important consequences for the finite system size. In the continuum version of the model, it turns out that we have stability for weak coupling (Theorem~\ref{theomain}) as in the setting of \cite[Theorem 4]{keller2000ergodic}. Yet, $g$ causes several subtle technical challenges in the proof of Theorem~\ref{theomain}, since the discontinuities imply that not an integral expression, but a certain value of the density $f$ will play a role in $F_{\varepsilon,\mu}'$ -- and the same can be said for $f'$ and $F_{\varepsilon,\mu}''$. A related comment we would like to make concerns Theorem~\ref{stability} which proves that the invariant density depends on $\varepsilon$ Lipschitz continuously. We think that this result is remarkable as in the presence of singularities typically only a weaker, log-Lipschitz continuous dependence can be expected (\cite{baladi2007susceptibility}, \cite{bonetto2000properties}, \cite{keller2008continuity}).

The remainder of the paper is organized as follows: in Section~\ref{s:results} we introduce our model, and state our three theorems concerning stability and synchronization. In Section~\ref{s:theo1proof} we prove our first theorem on stability: we show that there exists an invariant absolutely continuous distribution, which is unique in our setting. We then show that densities close enough to the invariant density converge to it with exponential speed. In Section~\ref{s:theo2proof} we prove our second theorem concerning the Lipschitz continuity of the invariant density in $\varepsilon$. In Section~\ref{s:theo3proof} we prove our theorem on synchronization, namely we show that for large enough $\varepsilon$, sufficiently well-concentrated initial distributions tend to a point mass moving on $\mathbb{T}$ according to $T$.

In the proof of our statements, especially those about stability, the choice of a suitable space of densities plays an important role. This choice is discussed in Section~\ref{s:remarks}, along with some further open problems.

\section{The model and main results}
\label{s:results}

Let $\mathbb{T}=\mathbb{R} / \mathbb{Z}$, and denote the Lebesgue measure on $\mathbb{T}$ by $\lambda$. Consider the Lebesgue-absolutely continuous probability measure $\text{d}\mu=f\text{d}\lambda$ on $\mathbb{T}$ and a self-map of $\mathbb{T}$ denoted by $T$. We make the following initial assumptions on them:
\begin{enumerate}
\item[(F)] $f \in C^1(\mathbb{T},\mathbb{R}_{\geq 0})$, $f'$ is furthermore Lipschitz continuous and $\int f \text{ d}\lambda=1$,
\item[(T)] $T \in C^2(\mathbb{T}, \mathbb{T})$, $T''$ is Lipschitz continuous and $T$ is strictly expanding: that is, $\min|T'| = \omega > 1$. We further suppose that $T$
is $N$-fold covering and
\[
N < \omega^2.
\]
\end{enumerate}
Define $\Phi_{\mu}: \mathbb{T} \to \mathbb{T}$ as
\[
\Phi_{\mu}(x)=x+\varepsilon \int_{\mathbb{T}} g(y-x)\text{d}\mu(y) \qquad x \in \mathbb{T},
\]
where $0 \leq \varepsilon < 1$ and $g: \mathbb{T} \to \mathbb{R}$ is defined as
\begin{equation} \label{gg}
g(u)=
\begin{cases}
u & \text{if } u \in \left(-\frac{1}{2},\frac{1}{2} \right), \\
0 & \text{if } u =\pm \frac{1}{2}.
\end{cases}
\end{equation}
The graph of the natural lift of this function to $\mathbb{R}$ is depicted
in Figure \ref{g}.

\begin{figure}[h!]
 \centering
 \begin{tikzpicture}[scale=2]
       \draw[->] (-2.5,0) -- (2.5,0) node[above] {$u$};
       \draw[->] (0,-1) -- (0,1) node[right] {\hspace{0.1cm}$g(u)$};
       \draw[very thick,red] (-0.7,-0.7) -- (0.7,0.7);
       \draw[very thick,red] (0.7,-0.7) -- (2.1,0.7);
       \draw[very thick,red] (-0.7,0.7) -- (-2.1,-0.7);
       \draw[red, fill] (0.7,0) circle (0.03cm);
       \draw[red, fill] (-0.7,0) circle (0.03cm);
       \draw[red, fill] (2.1,0) circle (0.03cm);
       \draw[red, fill] (-2.1,0) circle (0.03cm);
       \foreach \x/\xtext in {0.7/\frac{1}{2},1.4/1, 2.1/\frac{3}{2},-0.7/-\frac{1}{2},-1.4/-1, -2.1/-\frac{3}{2}}
           \draw[shift={(\x,0)}] (0pt,2pt) -- (0pt,-2pt) node[below] {$\xtext$};
       \foreach \y/\ytext in {0.7/\frac{1}{2},-0.7/-\frac{1}{2}}
             \draw[shift={(0,\y)}] (2pt,0pt) -- (-2pt,0pt) node[left] {$\ytext$};
     \end{tikzpicture}
     \caption{The function $g$.} \label{g}
     \end{figure}
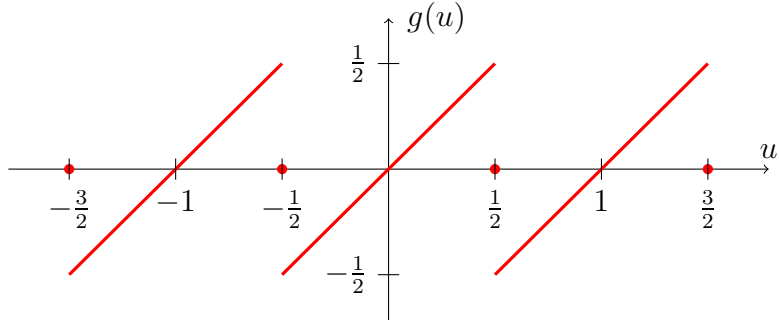
Define $F_{\mu}: \mathbb{T} \to \mathbb{T}$ as
\[
F_{\mu}= T \circ \Phi_{\mu}.
\]
This can be regarded as a coupled map dynamics in the following way:  $\Phi_{\mu}$ accounts for the interaction between the sites (distributed according to the measure $\mu$) via the interaction function $g$. The map $T$ is the individual site dynamics. We call the parameter $\varepsilon$ coupling strength.

Let $\mu_0$ be the initial distribution and define
\begin{equation} \label{pforward}
\mu_{n+1}=(F_{\mu_n})_{\ast}\mu_n, \quad n=0,1,\dots
\end{equation}

Our assumptions guarantee that if $\mu_0 \ll \lambda$ with density $f_0$ of property (F) then $\mu_n \ll \lambda$ with density of property (F) for all $n \in \mathbb{N}$ (this will be proved later). We are going to use the notation $\text{d}\mu_n=f_n\text{d}\lambda$ for the densities. Also, we are going to index $F_{\cdot}$ and $\Phi_{\cdot}$ with the density instead of the measure. Now $f_{n+1}$ can be calculated with the help of the transfer operator $P_{F_{f_n}}$ in the following way:
\[
f_{n+1}(y)=P_{F_{f_n}}f_n(y)=\sum_{x: \thinspace  F_{f_n}(x)=y}\frac{f_n(x)}{|F_{f_n}'(x)|}, \qquad y \in \mathbb{T}.
\]
To simplify notation, we are going to write
\[
P_{F_f}f=\mathcal{F}_{\varepsilon} (f),
\]
so $f_{n+1}=\mathcal{F}_{\varepsilon} (f_n)$.

Our main goal is to show that depending on $\varepsilon$, there exist two different limit behaviors of the sequence $(\mu_n)$.

The first type of limit behavior occurs when the coupling is sufficiently weak in terms of the regularity of the initial distribution. In this case, we claim that there exists an invariant distribution with density of property (F), and initial distributions sufficiently close to it converge to this distribution exponentially. More precisely, let
\[
\mathcal{C}_{R,S}^c=\{ f \text{ is of property (F)}, \text{var}(f) \leq R, |f'| \leq S, \text{Lip}(f') \leq c \}
\]
where we denoted the total variation of the function $f$ by var$(f)$, its Lipschitz constant by Lip($f$) and $R,S,c > 0$.

The choice of the set of densities $\mathcal{C}_{R,S}^c$ plays a central role in the arguments -- see the discussion in Section~\ref{s:discussion}. It is a subset of the space of functions of bounded variation, so we can endow it with the usual bounded variation norm:
\[
\|f\|_{BV}=\|f\|_1+\text{var}(f),
\]
where
\[
\|f\|_1=\int |f| \text{ d}\lambda \quad \text{and} \quad \text{var}(f)=\int |f'| \text{ d}\lambda.
\]
Note that the total variation $\text{var}(f)$ can be calculated indeed with this simple formula, since $f$ is continuously differentiable.

\begin{theo} \label{theomain}
There exist $R^*,S^*$ and $c^*>0$  such that for all
$R>R^*$, $S>S^*$ and $c>c^*$ there exists an $\varepsilon^*=\varepsilon^*(R,S,c)>0$, for which the following holds:
For all $0 \leq \varepsilon < \varepsilon^*$, there exists a density $f_*^{\varepsilon} \in \mathcal{C}_{R,S}^c$ for which $\mathcal{F}_{\varepsilon}f_*^{\varepsilon}=f_*^{\varepsilon}$. Furthermore,
\[
\lim_{n \to \infty}\mathcal{F}_{\varepsilon}^n(f_0)=f_{*}^{\varepsilon} \quad \text{ exponentially for all } f_0  \in \mathcal{C}_{R,S}^c
\]
in the sense that there exist $C > 0$ and $\gamma\in(0,1)$ such that
\[
\|\mathcal{F}_{\varepsilon}^n(f_0)-f_*^{\varepsilon}\|_{BV} \leq C \gamma^n \|f_0-f_*^{\varepsilon}\|_{BV} \quad \text{ for all } n \in \mathbb{N}
\text{ and } 0\leq \varepsilon<\varepsilon^*.
\]
\end{theo}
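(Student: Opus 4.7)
The plan is to adapt the spectral-theoretic strategy of Keller~\cite{keller2000ergodic} in three steps: show that $\mathcal{F}_\varepsilon$ preserves $\mathcal{C}_{R,S}^c$ for small $\varepsilon$, produce a fixed point by compactness, and then establish exponential contraction in BV, which will give both uniqueness and convergence. The main novelty over~\cite{keller2000ergodic} comes from the discontinuity of $g$: integration by parts on $(-\tfrac12,\tfrac12)$ using $g(u)=u$ yields
\[
G'(x)=\tfrac12\bigl(f(x+\tfrac12)+f(x-\tfrac12)\bigr)-1,\qquad G''(x)=\tfrac12\bigl(f'(x+\tfrac12)+f'(x-\tfrac12)\bigr),
\]
so \emph{pointwise} values of $f$ and $f'$ enter $F_f'$ and $F_f''$, and controlling them is precisely why $\mathcal{C}_{R,S}^c$ imposes regularity beyond BV. Since $\int f=1$ and $\text{var}(f)\le R$ force $\|f\|_\infty\le 1+R$, we have $|G'|\le R$; for $\varepsilon<1/R$ the map $\Phi_f$ is a $C^2$ diffeomorphism of $\mathbb{T}$ and $F_f$ is $N$-to-1 expanding with $|F_f'|\ge\omega(1-\varepsilon R)>1$.

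From these formulas I would derive three coupled Lasota--Yorke-type inequalities for the quantities controlling membership in $\mathcal{C}_{R,S}^c$, schematically
\[
\begin{aligned}
\text{var}(\mathcal{F}_\varepsilon f) &\le \tfrac{1+O(\varepsilon)}{\omega}\,\text{var}(f)+K_1,\\
\|(\mathcal{F}_\varepsilon f)'\|_\infty &\le \tfrac{1+O(\varepsilon)}{\omega}\,\|f'\|_\infty+K_2,\\
\text{Lip}((\mathcal{F}_\varepsilon f)') &\le \tfrac{N(1+O(\varepsilon))}{\omega^2}\,\text{Lip}(f')+K_3,
\end{aligned}
\]
with $K_i$ depending only on $R,S$ and the $C^2$-data of $T$. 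The hypothesis $N<\omega^2$ is exactly what makes the third line a strict contraction; choosing $c^*$ large compared to $K_3/(1-N/\omega^2)$, then $S^*$ and $R^*$ in turn, and finally $\varepsilon$ small, gives invariance of $\mathcal{C}_{R^*,S^*}^{c^*}$. This class is convex and, thanks to the uniform Lipschitz bound on $f'$, compact in the $C^1$ topology by Arzelà--Ascoli; a small $C^1$-change of $f$ induces a small $C^2$-change of $F_f$, so $\mathcal{F}_\varepsilon$ is $C^1$-continuous on this class, and Schauder--Tychonoff delivers $f_*^\varepsilon\in\mathcal{C}_{R,S}^c$ with $\mathcal{F}_\varepsilon f_*^\varepsilon=f_*^\varepsilon$.

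For the exponential contraction I would decompose, for $f,g\in\mathcal{C}_{R,S}^c$,
\[
\mathcal{F}_\varepsilon f-\mathcal{F}_\varepsilon g \;=\; P_{F_f}(f-g)\;+\;\bigl(P_{F_f}-P_{F_g}\bigr)g.
\]
A perturbation estimate $\|F_f-F_g\|_{C^2}\le C\varepsilon\|f-g\|_{BV}$---in which the pointwise contributions at $x\pm\tfrac12$ are absorbed using the uniform Lipschitz bound on $f',g'$---yields $\|(P_{F_f}-P_{F_g})g\|_{BV}\le C\varepsilon\|f-g\|_{BV}$. For the first summand, $h:=f-g$ has mean zero and $P_{F_f}$ is a uniformly $C^2$-small perturbation of $P_T$ as $f$ ranges over $\mathcal{C}_{R,S}^c$; the Keller--Liverani stability theorem then transfers the spectral gap of $P_T$ on the mean-zero BV subspace to each $P_{F_f}$, so that $\|P_{F_f}^n h\|_{BV}\le C\gamma_0^n\|h\|_{BV}$ for some uniform $\gamma_0<1$. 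Iterating the decomposition for $\mathcal{F}_\varepsilon^n$ and taking $\varepsilon$ small enough to absorb the cross terms into a slightly larger rate $\gamma\in(\gamma_0,1)$ completes the argument; uniqueness of $f_*^\varepsilon$ in $\mathcal{C}_{R,S}^c$ follows automatically.

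The main obstacle is exactly the discontinuity of $g$: it forces pointwise values of $f$ and $f'$ into every derivative of $F_f$, and hence into every Lasota--Yorke and every perturbation bound. This is why $\mathcal{C}_{R,S}^c$ must control $\|f'\|_\infty$ and $\text{Lip}(f')$ on top of the BV norm---an extra degree of regularity beyond the smooth-coupling framework of~\cite{keller2000ergodic}---and keeping the three coupled inequalities above self-consistent, with constants uniform in $\varepsilon$, is the most delicate piece of bookkeeping in the argument.
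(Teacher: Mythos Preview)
Your overall architecture matches the paper's: invariance of $\mathcal{C}_{R,S}^c$ plus Schauder for existence, then a telescoping/perturbative argument for exponential convergence. Two genuine gaps remain, however.

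First, the perturbation estimate $\|F_f-F_g\|_{C^2}\le C\varepsilon\|f-g\|_{BV}$ is false. Since $\Phi_f''(x)=\varepsilon f'(x+\tfrac12)$, the $C^2$-distance involves $\|f'-g'\|_\infty$, and this is \emph{not} controlled by $\|f-g\|_{BV}=\|f-g\|_1+\|f'-g'\|_1$; knowing that $f',g'$ are each Lipschitz with constant $c$ only yields $\|f'-g'\|_\infty\lesssim\sqrt{c\,\|f'-g'\|_1}$, which is a square-root bound, not linear. The paper avoids this by never passing through a $C^2$-closeness of the maps: it estimates $\|(P_{\Phi_1}-P_{\Phi_2})\varphi\|_{BV}$ directly (its Claim~11), so that only $\int|f_1'-f_2'|=\var(f_1-f_2)$ appears, and the regularity of $\varphi\in\mathcal{C}_{R,S}^c$ (in particular $\text{Lip}(\varphi')\le c$) absorbs the remaining terms. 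This is precisely where the extra regularity in $\mathcal{C}_{R,S}^c$ is spent, and it is more delicate than the $C^2$-closeness shortcut you propose.

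Second, invoking Keller--Liverani to get $\|P_{F_f}^n h\|_{BV}\le C\gamma_0^n\|h\|_{BV}$ for each \emph{fixed} $f$ does not close the iteration. Expanding $\mathcal{F}_\varepsilon^n f_0-f_*^\varepsilon$ produces the \emph{non-stationary} product $P_{F_{f_{n-1}}}\cdots P_{F_{f_0}}(f_0-f_*^\varepsilon)$, where the $f_k$ change at every step; a uniform spectral gap for each single operator does not imply contraction of such products. The paper proves a separate lemma (its Lemma~10) showing $\|P_TP_{\Phi_n}\cdots P_TP_{\Phi_1}u\|_{BV}\le c_1\beta^n\|u\|_{BV}$ for arbitrary $f_1,\dots,f_n\in\mathcal{C}_{R,S}^c$, by combining a uniform Lasota--Yorke inequality with $L^1$-closeness of each $P_{\Phi_k}$ to $P_{\Phi_*}$. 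A smaller point: your contraction factors are off---the sup bound on $(\mathcal{F}_\varepsilon f)'$ sums over $N$ branches and carries the factor $N/\omega^2$ (not $1/\omega$), so the hypothesis $N<\omega^2$ is needed already there, while the Lipschitz step has factor $N/\omega^3$.
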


In this case we can also show that the fixed density of $\mathcal{F}_{\varepsilon}$ is Lipschitz continuous in the variable $\varepsilon$.

\begin{theo}  \label{stability}
Let $R,S,c$ and $\varepsilon^{*}$ be chosen as in Theorem~\ref{theomain}. Then there exists a $K(R,S,c)=K > 0$ such that for any $0 \leq \varepsilon, \varepsilon' < \varepsilon^{*}$
\[
\|f_*^{\varepsilon}-f_*^{\varepsilon'}\|_{BV} \leq K|\varepsilon-\varepsilon'|
\]
holds for the densities $f_*^{\varepsilon}, f_*^{\varepsilon'} \in \mathcal{C}_{R,S}^c$ for which
$\mathcal{F}_{\varepsilon}f_*^{\varepsilon}=f_*^{\varepsilon}$ and $\mathcal{F}_{\varepsilon'}f_*^{\varepsilon'}=f_*^{\varepsilon'}$.
\end{theo}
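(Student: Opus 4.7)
My plan is to combine the exponential contraction of Theorem~\ref{theomain} with a bootstrap/telescoping argument that reduces everything to two one-step perturbation estimates for $\mathcal{F}_\varepsilon$. Fix $0 \leq \varepsilon, \varepsilon' < \varepsilon^*$ and write $f_\varepsilon := f_*^\varepsilon$, $f_{\varepsilon'} := f_*^{\varepsilon'}$; both lie in $\mathcal{C}_{R,S}^c$. Applying Theorem~\ref{theomain} with initial datum $f_{\varepsilon'}$ to the dynamics $\mathcal{F}_\varepsilon$ and using $f_{\varepsilon'} = \mathcal{F}_{\varepsilon'}^n f_{\varepsilon'}$, the triangle inequality yields
\[
\|f_\varepsilon - f_{\varepsilon'}\|_{BV} \leq \|f_\varepsilon - \mathcal{F}_\varepsilon^n f_{\varepsilon'}\|_{BV} + \|\mathcal{F}_\varepsilon^n f_{\varepsilon'} - \mathcal{F}_{\varepsilon'}^n f_{\varepsilon'}\|_{BV} \leq C\gamma^n \|f_\varepsilon - f_{\varepsilon'}\|_{BV} + \|\mathcal{F}_\varepsilon^n f_{\varepsilon'} - \mathcal{F}_{\varepsilon'}^n f_{\varepsilon'}\|_{BV}.
\]
Choosing $n_0$ large enough (depending only on $C,\gamma$, hence on $R,S,c$) that $C\gamma^{n_0}\leq 1/2$, I absorb the first term into the left-hand side to obtain
\[
\|f_\varepsilon - f_{\varepsilon'}\|_{BV} \leq 2\|\mathcal{F}_\varepsilon^{n_0} f_{\varepsilon'} - \mathcal{F}_{\varepsilon'}^{n_0} f_{\varepsilon'}\|_{BV}.
\]
It therefore suffices to bound the right-hand side by a constant multiple of $|\varepsilon-\varepsilon'|$ that is independent of $\varepsilon,\varepsilon'$.

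To control the finite number $n_0$ of iterates, I will establish two basic estimates on the invariant class $\mathcal{C}_{R,S}^c$:
\[
\|\mathcal{F}_\varepsilon f - \mathcal{F}_{\varepsilon'} f\|_{BV} \leq K_1|\varepsilon-\varepsilon'|, \qquad \|\mathcal{F}_\varepsilon f - \mathcal{F}_\varepsilon \tilde f\|_{BV} \leq L\|f - \tilde f\|_{BV},
\]
uniformly for $f,\tilde f\in\mathcal{C}_{R,S}^c$ and $\varepsilon,\varepsilon'\in[0,\varepsilon^*)$. Assuming these and the invariance of $\mathcal{C}_{R,S}^c$ under $\mathcal{F}_\varepsilon$ (established in the course of proving Theorem~\ref{theomain}), the quantities $e_k := \|\mathcal{F}_\varepsilon^k f_{\varepsilon'} - \mathcal{F}_{\varepsilon'}^k f_{\varepsilon'}\|_{BV}$ satisfy $e_{k+1}\leq L\,e_k + K_1|\varepsilon-\varepsilon'|$ (by adding and subtracting $\mathcal{F}_\varepsilon(\mathcal{F}_{\varepsilon'}^k f_{\varepsilon'})$ and invoking the triangle inequality), with $e_0=0$. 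Hence $e_{n_0}$ is bounded by a constant depending only on $n_0,L,K_1$ times $|\varepsilon-\varepsilon'|$; combining with the previous display yields the desired Lipschitz bound $\|f_\varepsilon - f_{\varepsilon'}\|_{BV} \leq K|\varepsilon-\varepsilon'|$ with $K=K(R,S,c)$.

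The substantive work, and what I expect to be the main obstacle, is the verification of these two one-step estimates. Both hinge on a careful analysis of the transfer-operator formula
\[
(\mathcal{F}_\varepsilon f)(y) = \sum_{x:F_f^\varepsilon(x)=y}\frac{f(x)}{|(F_f^\varepsilon)'(x)|}, \qquad F_f^\varepsilon(x) = T(x+\varepsilon G_f(x)), \quad G_f(x) = \int_{\mathbb{T}} g(y-x)\,f(y)\,d\lambda(y).
\]
The $\varepsilon$-continuity estimate should follow by differentiating in $\varepsilon$ at fixed $f$: since $F_f^\varepsilon$ depends linearly on $\varepsilon$ through the coupling term, $\partial_\varepsilon(\mathcal{F}_\varepsilon f)$ can be written explicitly in terms of $f$, $G_f$ and derivatives of $T$, and bounded in $BV$ uniformly on $\mathcal{C}_{R,S}^c$. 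The Lipschitz-in-$f$ estimate is considerably more delicate: integration by parts against the jumps of $g$ at $\pm 1/2$ gives the identity $G_f'(x) = f(x+1/2) - 1$, so that $(F_f^\varepsilon)'$ and $(F_f^\varepsilon)''$ depend on $f$ not only through integral averages but through the \emph{point-values} $f(x+1/2)$ and $f'(x+1/2)$, exactly the feature singled out in the discussion preceding Theorem~\ref{theomain}. Consequently, the $BV$ norm of $\mathcal{F}_\varepsilon f - \mathcal{F}_\varepsilon\tilde f$ produces terms involving $\|f-\tilde f\|_\infty$ and $\|f'-\tilde f'\|_\infty$, which must be absorbed into $\|f-\tilde f\|_{BV}$ via Sobolev-type embeddings on $\mathbb{T}$; the uniform bounds $|f|\leq\text{const}$, $|f'|\leq S$ and $\mathrm{Lip}(f')\leq c$ encoded in $\mathcal{C}_{R,S}^c$ are exactly what makes it possible to keep $L$ finite and $\varepsilon$-independent, in contrast to the merely log-Lipschitz dependence common in related settings with singularities.
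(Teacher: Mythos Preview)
Your approach matches the paper's: both reduce the Lipschitz bound to an $N$-step perturbation estimate $\|\mathcal{F}_\varepsilon^N f-\mathcal{F}_{\varepsilon'}^N f\|_{BV}\le a_N|\varepsilon-\varepsilon'|$ (this is the paper's Lemma~\ref{iterN_general}, proved by the same recursion you set up), then combine with the contraction of Theorem~\ref{theomain}; the paper phrases the final step as invariance of a small ball around $f_*^{\varepsilon'}$, which is equivalent to your absorption argument.

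Two observations will streamline your execution. First, the paper does not differentiate in $\varepsilon$ for the $\varepsilon$-continuity estimate; instead it uses the reparametrization $\Phi_f^{\varepsilon'}=\Phi_{\varepsilon' f/\varepsilon}^{\varepsilon}$, which reduces that estimate instantly to Claim~\ref{claim2} (the $f$-Lipschitz bound for $P_{\Phi_f}$ already established in the proof of Theorem~\ref{theomain}). Second, your anticipated obstruction $\|f'-\tilde f'\|_\infty$ does not in fact appear: the computation in Claim~\ref{claim2} shows that only $\|f'-\tilde f'\|_1=\mathrm{var}(f-\tilde f)\le\|f-\tilde f\|_{BV}$ is needed, the pointwise values of $f'$ and $\tilde f'$ entering only through the uniform bound $|f'|,|\tilde f'|\le S$. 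So no Sobolev embedding beyond the standard $\|\cdot\|_\infty\le\|\cdot\|_{BV}$ is required, and your Lipschitz constant $L$ is simply $(K_0+1)+\varepsilon^*K\|P_T\|_{BV}$ from \eqref{unif} and Claim~\ref{claim2}.
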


However, when the coupling is strong, we expect to see synchronization in some sense. To be able to prove such behavior we need an initial distribution which is `sufficiently well concentrated' -- by this we mean the following:
\begin{enumerate}
\item[(F')] $f$ is of property (F), furthermore, there exists an interval $I \subset \mathbb{T}$, $|I| \geq \frac{1}{2}$ such that $\supp(f) \cap I = \emptyset$.
\end{enumerate}
In this case we can define $\supp^* (f)$ as the smallest closed interval on $\mathbb{T}$ containing the support of $f$.

Before stating our theorem, we recall the definition of the Wasserstein metric. Let $(S,d)$ be a metric space and let $\mathcal{P}_1(S)$ denote all measures $P$ for which $\int d(x,z)\text{ d}P(x) < \infty$ for every $z \in S$. Let $M(P,Q)$ be the set of measures on $S \times S$ with marginals $P$ and $Q$. Then the Wasserstein distance of $P$ and $Q$ is
\[
W_1(P,Q)=\inf \left\{\int d(x,y) \text{ d}\mu(x,y), \text{ } \mu \in M(P,Q) \right\}.
\]
By the Kantorovich-Rubinstein theorem {\cite[Theorem 11.8.2]{dudley2002real}} it holds that
\[
W_1(P,Q)=\sup \left\{\left|\int f \text{ d}(P-Q)\right|, \text{ } \text{Lip}(f) \leq 1 \right\}.
\]
\begin{theo} \label{largeeps}
Suppose $1-\frac{1}{\max |T'|} < \varepsilon < 1$. Then
\[
|\supp^* (f_n)| \underset{n \to \infty}{\to} 0 \quad \text{exponentially}
\]
in the sense that
\[
|\supp^* (f_n)| \leq [\max |T'|(1-\varepsilon)]^n |\supp^* (f_0)| \quad \text{for all} \quad n \in \mathbb{N}.
\]
Furthermore, there exists an $x^* \in \supp^*(f_0)$ such that
\[
W_1(\mu_n,\delta_{T^n(x^*)}) \underset{n \to \infty}{\to} 0 \quad \text{exponentially}
\]
in the sense that
\[
W_1(\mu_n,\delta_{T^n(x^*)}) \leq [\max |T'|(1-\varepsilon)]^n W_1(\mu_0,\delta_{x^*}) \quad \text{for all} \quad n \in \mathbb{N}.
\]
\end{theo}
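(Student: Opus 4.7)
The plan is to proceed in three steps: a local affine reduction for $\Phi_{\mu_n}$ on the support, a backward-shadowing construction of $x^*$, and a coupling-based estimate for the Wasserstein bound. The crucial observation is that on $I_n := \supp^*(f_n)$, the bound $|I_n| \leq 1/2$ forces $|y - w| \leq 1/2$ for all $y, w \in I_n$. Since $\mu_n$ is absolutely continuous, the set $\{y : y - w = \pm 1/2\}$ is $\mu_n$-null, so
\[
\Phi_{\mu_n}(w) = w + \varepsilon \int_{\mathbb{T}} g(y - w)\,d\mu_n(y) = (1-\varepsilon) w + \varepsilon \bar z_n, \qquad \bar z_n := \int_{\mathbb{T}} y\,d\mu_n(y),
\]
which is an affine contraction of ratio $(1-\varepsilon)$ fixing the mean $\bar z_n \in I_n$. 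Composing with $T$ shows that $F_{\mu_n}$ is $\alpha$-Lipschitz on $I_n$ with $\alpha := \max|T'|(1-\varepsilon)$, and the hypothesis $\varepsilon > 1 - 1/\max|T'|$ gives $\alpha < 1$. Since $\supp(f_{n+1}) = F_{\mu_n}(\supp(f_n))$ and $|I_0| \leq 1/2$ by (F'), induction yields $|I_{n+1}| \leq \alpha |I_n|$ and hence the first claim.

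To produce a $T$-orbit staying in the sequence $(I_n)$, I would use backward iteration with the inverse branches of $T|_{I_n}$. For $|I_n|$ small, $T|_{I_n}:I_n\to T(I_n)$ is a diffeomorphism, and because $\Phi_{\mu_n}(I_n)\subseteq I_n$ (convex combination with $\bar z_n$) we have $I_{n+1}\subseteq F_{\mu_n}(I_n)\subseteq T(I_n)$; the inverse branch $\tau_n:I_{n+1}\to I_n$ is therefore well-defined and $(1/\omega)$-Lipschitz. For any base points $p_N\in I_N$, the points $x_N := \tau_0\circ\tau_1\circ\cdots\circ\tau_{N-1}(p_N)\in I_0$ form a Cauchy sequence since $|x_{N+1}-x_N|\leq \omega^{-N}|I_N|\leq (\alpha/\omega)^N |I_0|$; its limit $x^*\in I_0$ satisfies $T^n(x^*)\in I_n$ for every $n$.

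For the Wasserstein bound I would couple $\mu_0$ with $\delta_{x^*}$ via the product, push it forward by $(F_n^0,T^n)$ with $F_n^0 := F_{\mu_{n-1}}\circ\cdots\circ F_{\mu_0}$, obtaining
\[
W_1(\mu_n,\delta_{T^n(x^*)})\leq \int_{\mathbb{T}}|F_n^0(w)-T^n(x^*)|\,d\mu_0(w),
\]
and then establish by induction the pointwise bound $|F_n^0(w)-T^n(x^*)|\leq \alpha^n|w-x^*|$ for $w\in\supp(\mu_0)$. The inductive step rewrites $\Phi_{\mu_k}(w')-T^k(x^*)=(1-\varepsilon)(w'-T^k(x^*))+\varepsilon(\bar z_k - T^k(x^*))$, and applying $T$ produces a main contribution of order $\alpha\,|w'-T^k(x^*)|$ plus a cross-term $\max|T'|\,\varepsilon\,|\bar z_k - T^k(x^*)|$. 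The main obstacle is absorbing this cross-term without losing the rate $\alpha$: in the doubling-map case of \cite{selley2016mean} the linearity of $T$ makes $(\bar z_k)$ itself a $T$-orbit, so the choice $x^*=\bar z_0$ kills the cross-term identically at every step. In the present nonlinear setting one only has the pseudo-orbit estimate $|T(\bar z_k)-\bar z_{k+1}|=O(|I_k|^2)$ coming from the mean-zero second-order Taylor correction, so closing the induction requires combining this with the contraction of the shadow orbit so that $|\bar z_k-T^k(x^*)|$ decays at rate comparable to $\alpha^k$, making the cross-term compatible with the target bound $\alpha^n W_1(\mu_0,\delta_{x^*})$.
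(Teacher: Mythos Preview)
Your first two steps—the affine reduction of $\Phi_{\mu_n}$ on the support and the backward construction of $x^*$ via inverse branches—match the paper's argument essentially verbatim. The paper phrases the contraction via $\Phi'_{f_0}(x)=1-\varepsilon$ on $I_0$ (since $f_0(x+\tfrac12)=0$ there), which is just the derivative of your affine formula; likewise its nested-interval construction $\bigcap_n T^{-n}F^n(\supp^*f_0)$ is exactly your limit $\lim_N \tau_0\circ\cdots\circ\tau_{N-1}(p_N)$.

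The divergence is in step three, where you work much harder than necessary and end with a genuinely unresolved obstacle. The paper's Wasserstein estimate is a one-liner once $T^n(x^*)\in I_n$ is known: by Kantorovich--Rubinstein,
\[
W_1(\mu_n,\delta_{T^n(x^*)})=\sup_{\mathrm{Lip}(\ell)\le 1}\left|\int_{I_n}\bigl(\ell(x)-\ell(T^n(x^*))\bigr)f_n(x)\,dx\right|\le |I_n|\le\alpha^n|I_0|,
\]
since both the integration variable and the reference point sit inside $I_n$. No pointwise tracking, no cross-term. Your attempt to propagate $|F_n^0(w)-T^n(x^*)|\le\alpha^n|w-x^*|$ for each individual $w$ really does fail for the reason you identify: the term $\varepsilon(\bar z_k-T^k(x^*))$ does not see the particular $w$ at all, only the diameter of $I_k$, so after one step the pointwise inequality already collapses to the diameter bound. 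The pseudo-orbit estimate $|T(\bar z_k)-\bar z_{k+1}|=O(|I_k|^2)$ you invoke is correct but does not rescue the rate $\alpha^n|w-x^*|$; at best it recovers $\alpha^n|I_0|$, which is precisely what the direct Kantorovich--Rubinstein argument gives for free.

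One minor remark: neither your route nor the paper's actually produces the constant $W_1(\mu_0,\delta_{x^*})$ written in the theorem statement---the natural constant that drops out is $|\supp^*(f_0)|$, which dominates $W_1(\mu_0,\delta_{x^*})$. The exponential rate $\alpha^n$ is the substantive content.
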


So we claim that when the coupling is sufficiently strong, the support of a well-concentrated initial density eventually shrinks to a single point, hence complete synchronization is achieved.

We have chosen the Wasserstein metric to state our theorem because the convergence in this metric is equivalent to weak convergence of measures. This is about the best that can be expected, since for example a similar statement for the total variation distance cannot hold - a sequence of absolutely continuous measures cannot converge to a point measure in total variation distance.

In the subsequent three chapters we are going to give the proofs of Theorems~ \ref{theomain},~\ref{stability} and ~\ref{largeeps}.

\section{Proof of theorem \ref{theomain}}
\label{s:theo1proof}

\subsection{Existence of an invariant density} In this section we prove the following proposition:

\begin{prop} \label{propfix}
There exist $\varepsilon^*_0,R^*,S^*,c^* > 0$ such that if $0 \leq \varepsilon < \varepsilon^*_0$, the operator $\mathcal{F}_{\varepsilon}$ has a fixed point $f_*^{\varepsilon}$ in $\mathcal{C}_{R^*,S^*}^{c^*}$.
\end{prop}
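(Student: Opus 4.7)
The plan is to apply Schauder's fixed-point theorem to $\mathcal{F}_\varepsilon$ acting on the convex set $\mathcal{C}_{R^*,S^*}^{c^*}$, after choosing the parameters large enough and $\varepsilon$ small enough to make this set forward invariant.

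The first step is to verify that, despite the discontinuity of $g$, the composed map $F_f = T \circ \Phi_f$ has the regularity needed to run a Lasota-Yorke type argument. Parametrizing the integral over the fundamental domain $[x-1/2,x+1/2]$ gives $h_f(x) := \int_{\mathbb{T}} g(y-x)\,f(y)\,d\lambda(y) = \int_{-1/2}^{1/2} u\,f(u+x)\,du$, and either direct differentiation with Leibniz's rule or integration by parts, together with $f(x+\tfrac12)=f(x-\tfrac12)$ on $\mathbb{T}$, yields $h_f'(x) = f(x+\tfrac12) - 1$ and $h_f''(x) = f'(x+\tfrac12)$. Consequently $\Phi_f'(x) = 1-\varepsilon + \varepsilon f(x+\tfrac12)$ and $\Phi_f''(x) = \varepsilon f'(x+\tfrac12)$, so $\Phi_f$ is a $C^2$ orientation-preserving covering with $\Phi_f''$ Lipschitz, and $\Phi_f'$ bounded away from $0$ for $\varepsilon<1$. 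Combined with assumption (T), this shows $F_f \in C^2$ with $F_f''$ Lipschitz and $|F_f'| \geq \omega_f \geq \omega(1-\varepsilon)$.

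The second step is to derive Lasota-Yorke type inequalities for $\mathcal{F}_\varepsilon f(y) = \sum_{i=1}^N f(\phi_i(y))\,|\phi_i'(y)|$ with inverse branches $\phi_i$ of $F_f$. Differentiating, and using $|\phi_i'| \leq 1/\omega_f$, $|\phi_i''| \leq \|F_f''\|_\infty / \omega_f^3$, and an analogous bound on $\phi_i'''$ (available from the Lipschitz property of $F_f''$), one obtains estimates of the form $\|(\mathcal{F}_\varepsilon f)'\|_\infty \leq (N/\omega_f^2)\|f'\|_\infty + A_1$ and $\text{Lip}((\mathcal{F}_\varepsilon f)') \leq (N/\omega_f^3)\,\text{Lip}(f') + A_2$, with constants $A_1, A_2$ depending polynomially on $\|f\|_\infty$ and $\|f'\|_\infty$ and bounded in $\varepsilon$. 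The hypothesis $N<\omega^2$ from (T) makes $N/\omega_f^2 < 1$ uniformly for small $\varepsilon$, and \emph{a fortiori} $N/\omega_f^3 < 1$. Since $\text{var}(\mathcal{F}_\varepsilon f) = \|(\mathcal{F}_\varepsilon f)'\|_1 \leq \|(\mathcal{F}_\varepsilon f)'\|_\infty$, the variation is controlled by the same bound. Choosing $c^*$, then $S^*$, then $R^*$ sufficiently large, and finally $\varepsilon_0^*$ small enough, makes $\mathcal{C}_{R^*,S^*}^{c^*}$ forward invariant under $\mathcal{F}_\varepsilon$ for all $\varepsilon \in [0,\varepsilon_0^*)$.

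Finally, $\mathcal{C}_{R^*,S^*}^{c^*}$ is convex (the three seminorms involved are sublinear) and compact in the $C^1$-topology by applying Arzela-Ascoli to $\{f\}$ and $\{f'\}$ simultaneously; moreover $\mathcal{F}_\varepsilon$ is continuous in this topology, because the maps $f \mapsto h_f$ from $C^1$ into $C^2$, $f \mapsto F_f$ into $C^2$, and $(\psi,f)\mapsto P_\psi f$ are all continuous on the relevant spaces. Schauder's fixed-point theorem then delivers the desired $f_*^\varepsilon$. The main obstacle is the self-referential character of the Lasota-Yorke bounds: since $\Phi_f'' = \varepsilon f'(\cdot+\tfrac12)$, the constants $A_1, A_2$ above depend on the very quantities $\|f'\|_\infty$ and $\text{Lip}(f')$ one is trying to bound, so the parameters $R^*,S^*,c^*$ must be chosen in a self-consistent order, and smallness of $\varepsilon$ is used repeatedly to absorb the $\varepsilon$-dependent perturbations.
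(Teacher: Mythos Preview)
Your overall strategy matches the paper's: establish that $\mathcal{C}_{R,S}^c$ is forward invariant via Lasota--Yorke type estimates, check compactness and convexity, verify continuity of $\mathcal{F}_\varepsilon$, and apply Schauder. Your computation of $\Phi_f'$ and $\Phi_f''$ is exactly the one the paper uses, and your use of the $C^1$ topology (rather than the paper's $L^1$) for compactness and continuity is a perfectly valid alternative.

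There is, however, a genuine issue with the order in which you close the constants. You bound $\text{var}(\mathcal{F}_\varepsilon f)$ by $\|(\mathcal{F}_\varepsilon f)'\|_\infty$ and then propose to choose $c^*$, then $S^*$, then $R^*$. But the constant $A_1$ in your bound $\|(\mathcal{F}_\varepsilon f)'\|_\infty \le (N/\omega_f^2)S + A_1$ depends on $\|f\|_\infty$, which you can only control through either $R$ (via $\|f\|_\infty\le 1+\text{var}(f)\le 1+R$) or $S$ (via $\|f\|_\infty\le 1+S$). If you use $R$, then $R^*$ must be fixed \emph{before} $S^*$, not after. If you use $S$, the self-consistency inequality at $\varepsilon=0$ becomes $(N/\omega^2)S + (ND/\omega^3)(1+S)\le S$, which requires $N(\omega+D)<\omega^3$ with $D=\max|T''|$; this is strictly stronger than the hypothesis $N<\omega^2$ and can fail. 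The paper avoids this by first establishing the variation bound directly via the $L^1$ Lasota--Yorke inequality $\text{var}(P_T h)\le \omega^{-1}\text{var}(h)+\text{const}$, whose contraction factor is $1/\omega$ rather than $N/\omega^2$; this fixes $R^*$ (hence $\|f\|_\infty$) independently of $S$. With $R^*$ in hand, $A_1$ becomes a constant in $S$ and the $S^*$ bound closes under $N<\omega^2$; then $c^*$ is chosen last. So the fix is simple---reverse your order and do the integral variation estimate first---but as written the invariance step does not close under the stated hypotheses.
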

\begin{proof}
The structure of the proof is as follows: we first show that $\mathcal{C}_{R,S}^c$ is invariant under the action of $\mathcal{F}_{\varepsilon}$ if $R,S$ and $c$ are chosen large enough. Then we prove that $\mathcal{F}_{\varepsilon}$ restricted to $\mathcal{C}_{R,S}^c$ is continuous in the $L^1$ norm. We then argue that the set $\mathcal{C}_{R,S}^c$ is a compact, convex metric space for any values of $R,S$ and $c$. Finally, we conclude by Schauder's fixed point theorem that $\mathcal{F}_{\varepsilon}$ has a fixed point in $\mathcal{C}_{R,S}^c$.

\begin{lem} \label{lem:1}
There exists $R^*>0$ such that for all $R\geq R^*$ there are $\varepsilon_0^*=\varepsilon_0^*(R)>0$ and $S^*=S^*(R)>0$ with the following properties:
For each $S\geq S^*$ there is $c^*=c^*(R,S)$
 such that
$\mathcal{F}_{\varepsilon}(\mathcal{C}_{R,S}^c) \subseteq \mathcal{C}_{R,S}^c$ for $c \geq c^*$ and $0 \leq \varepsilon < \varepsilon^*_0$.
\end{lem}
\begin{proof}
The proof consist of the following steps: we first prove that if $f$ is of property (F), then $\mathcal{F}_{\varepsilon}(f)$ is also of property (F) -- as indicated in the introduction. Then let $f \in \mathcal{C}_{R,S}^c$ for some $R,S,c > 0$. We prove that we can choose $R$ large enough such that var$(\mathcal{F}_{\varepsilon}(f)) \leq R$. Then we prove that we can choose $S$ large enough such that $|(\mathcal{F}_{\varepsilon}(f))'| \leq S$ and $c$ large enough such that $\text{Lip}(\mathcal{F}_{\varepsilon}(f)') \leq c$ (provided that $\varepsilon$ is small enough).

\textbf{$\mathcal{F}_{\varepsilon}(f)$ is also of property (F).} First notice that
\[
\Phi'_{f}(x)=1+\varepsilon\left(f\left(x+\frac{1}{2}\right)-1 \right)
\quad \text{and} \quad \Phi''_{f}(x)=\varepsilon f'\left(x+\frac{1}{2}\right).
\]
This implies that $\Phi_{f}$ is monotone increasing:
\begin{equation} \label{phibound}
\Phi_{f}' \geq 1+\varepsilon (\inf_{\mathbb{T}} f-1) \geq 1-\varepsilon > 0 \qquad \text{if } 0 \leq \varepsilon < 1,
\end{equation}
and onto, since denoting the lift of $\Phi_f$ to $\mathbb{R}$ by $\Phi_{f}^{\mathbb{R}}$ we can see that
\[
\Phi_{f}^{\mathbb{R}}(0)=\varepsilon \int_{\mathbb{T}} g(y)f(y)\text{d}y
\quad \text{and} \quad
\Phi_{f}^{\mathbb{R}}(1)=1+\varepsilon \int_{\mathbb{T}} g(y-1)f(y)\text{d}y,
\]
implying that $\Phi_{f}^{\mathbb{R}}(1)=\Phi_{f}^{\mathbb{R}}(0)+1$. Bearing in mind the differentiability properties of $f$, we can observe that $\Phi_f$ is a $C^2$ diffeomorphism of $\mathbb{T}$. We are going to use the change of variables formula with respect to this diffeomorphism repeatedly in the calculations to come.

A further note on $\Phi_f$: we are going to denote the transfer operator associated to it by $P_{\Phi_f}$. Recall that this depends on $\varepsilon$. Denoting the transfer operator associated to $T$ by $P_T$, we have $P_{F_f}=P_TP_{\Phi_f}$ since $F_f=T \circ \Phi_f$.

Now we can see that $F_f$ is $N$-fold covering of $\mathbb{T}$. Let us denote the inverse branches by $F_f^{-1,k}$, $k=1,\dots,N$. Then
\[
\mathcal{F}_{\varepsilon}(f)=P_{F_{f}}f=\sum_{k=1}^{N}\frac{f \circ F_{f}^{-1,k}}{|F_{f}' \circ F_{f}^{-1,k}|},
\]
and we see that $\mathcal{F}_{\varepsilon}(f)$ is also $C^1$. It is also easy to see that the derivative
\begin{align} \label{der}
\mathcal{F}_{\varepsilon}(f)' = \sum_{k=1}^{N} \frac{f'}{(F_{f}')^2} \circ F_{f}^{-1,k} +\text{sign}(T')\,\cdot\sum_{k=1}^{N} \frac{f\cdot F_{f}''}{(F_{f}')^3} \circ F_{f}^{-1,k}.
\end{align}
is also Lipschitz continuous.

\textbf{Choice of $\mathbf{R}^*$.} Fix $0< b < \omega-1$. Then choose $R_0$ such that $\max|T''|/|T'| < b \cdot R_0$. First we note that
\begin{align*}
\var(P_{T}f) & \leq \int \left|\frac{f'}{T'}\right|\text{ d}\lambda +\int \left|\frac{f\cdot T''}{(T')^2} \right|\text{ d}\lambda \\
& \leq \max \left| \frac{1}{T'} \right |\var(f)+ \|f\|_1 \max \left( \frac{T''}{(T')^2} \right) \\
& \leq \max \frac{1}{|T'|}(\var(f)+bR_0).
\end{align*}

Let $\eta = \max \frac{1}{|T'|}(1+b)$ (note that $\eta < 1$ by our choice of $b$). Choose $\rho > 0$ such that $\delta=(1-\rho)^2-\eta > 0$. Let $R^*=\eta\cdot\max \left\{R_0,\frac{\rho}{\delta} \right\}$. Now given $R>R^*$, choose $\varepsilon_0^*\leqslant\frac{\rho}{R}$.
Later in the proof we will require further smallness properties of $\varepsilon_0^*$.

Fix $R\geqslant R^*$, $\varepsilon\in[0,\varepsilon_0^*)$, and
let $\var(f) \leq R$. As $\Phi_f'\geq 1-\varepsilon\var(f)\geq 1-\rho$ and $\int|\Phi_f''|=\varepsilon\var(f)\leq\rho$, we have
\begin{align*}
\text{var}(P_{\Phi_{f}}f)
& \leq
\var(f) \left( \max \left| \frac{1}{\Phi_{f}'} \right | + \var \left( \frac{1}{\Phi_{f}'} \right) \right)+\|f\|_1 \var \left( \frac{1}{\Phi_{f}'} \right) \\
& \leq
\var(f) \cdot\left( \frac{1}{1-\rho}+ \frac{\rho}{(1-\rho)^2} \right)+\|f\|_1 \frac{\rho }{(1-\rho)^2} \\
&=
\var(f) \frac{1}{(1-\rho)^2}+\frac{\rho}{(1-\rho)^2} \leq
\frac{1}{(1-\rho)^2}(R+\delta\eta^{-1} R^*)
\leqslant \frac{1}{\eta(1-\rho)^2}(\eta R+\delta R)\\
&=
\frac{R}{\eta}
\end{align*}
Finally, if $h=P_{\Phi_{f}}f$ then
\[
\var(P_Th) \leq \max \frac{1}{|T'|}(\eta^{-1}R+bR_0)
\leq \max \frac{1}{\eta\,|T'|}(R+bR^*)
\leq R.
\]
\textbf{Choice of $\mathbf{S}^*$.}
We are going to estimate $\mathcal{F}_{\varepsilon}(f)'$ using \eqref{der}. Since
\begin{align*}
F_{f}' \circ F_{f}^{-1,k}&=(T' \circ T^{-1,k}) \cdot (\Phi_{f}' \circ F_{f}^{-1,k})\\
F_{f}'' \circ F_{f}^{-1,k}&=(T'' \circ T^{-1,k}) \cdot (\Phi_f' \circ F_{f}^{-1,k})^2+(T' \circ T^{-1,k}) \cdot (\Phi_{f}'' \circ F_{f}^{-1,k}), \qquad k=1,\dots,N
\end{align*}
we get the following expression:
\begin{align*}
|\mathcal{F}_{\varepsilon}(f)'| &\leq \sum_{k=1}^{N} \left| \frac{f' \circ F_{f}^{-1,k}}{[(T' \circ T^{-1,k}) \cdot (\Phi_{f}' \circ F_{f}^{-1,k})]^2} \right| \\
&+\sum_{k=1}^{N} \left|\frac{(f \circ F_{f}^{-1,k}) \cdot [(T'' \circ T^{-1,k}) \cdot (\Phi_f' \circ F_{f}^{-1,k})^2+(T' \circ T^{-1,k}) \cdot (\Phi_{f}'' \circ F_{f}^{-1,k})]}{[(T' \circ T^{-1,k}) \cdot (\Phi_{f}' \circ F_{f}^{-1,k})]^3} \right|
\end{align*}
Remember that $\omega = \min|T'|$ and let $D=\max|T''|$. As $|f'|\leq S$
and $\varepsilon\in[0,\varepsilon_0^*)$, this implies
\begin{align*}
|\mathcal{F}_{\varepsilon}(f)'| &\leq \frac{N}{\omega^2(1-\varepsilon)^2}S+N(1+R)\left(\frac{D}{\omega^3(1-\varepsilon)}+\frac{\varepsilon S}{(1-\varepsilon)^3\omega^2} \right) \\
&\leq
\left(\frac{N}{\omega^2(1-\varepsilon_0^*)^2}+\frac{\varepsilon_0^* N(1+R)}{(1-\varepsilon_0^*)^3\omega^2}\right)S+\frac{DN(1+R)}{\omega^3(1-\varepsilon_0^*)} \\
&=: q_0(\varepsilon_0^*,R)\cdot S+K_0(\varepsilon_0^*,R).
\end{align*}
As $N<\omega^2$ by assumption (T), one can choose $\varepsilon_0^*=\varepsilon_0^*(R)$ so small that $q_0(\varepsilon_0^*,R)<1$.
Let
\[
S^*=S^*(R):=\frac{K_0(\varepsilon_0^*(R),R)}{1-q_0(\varepsilon_0^*(R),R)},
\]
and suppose that $S\geq S^*$. Then $|\mathcal{F}_{\varepsilon}(f)'|\leq S$.

\textbf{Choice of $\mathbf{c}^*$.} We are going to estimate $\text{Lip}(\mathcal{F}_{\varepsilon}(f)')$ with the help of \eqref{der}.
\begin{align*}
\text{Lip}(\mathcal{F}_{\varepsilon}(f)') & \leq \sum_{k=1}^{N} \text{Lip} \left( \frac{f'}{(F_{f}')^2} \circ F_{f}^{-1,k}\right) +\sum_{k=1}^{N} \text{Lip}\left(\frac{f\cdot F_{f}''}{(F_{f}')^3} \circ F_{f}^{-1,k} \right), \\
& \leq \frac{N}{\omega(1-\varepsilon_0^*)} \cdot \left( \text{Lip} \left( \frac{f'}{(F_{f}')^2} \right)+ \text{Lip}\left(\frac{f\cdot F_{f}''}{(F_{f}')^3} \right) \right),
\end{align*}
since $\max|(F_{f}^{-1})'| \leq \frac{1}{\omega(1-\varepsilon)} \leq \frac{1}{\omega(1-\varepsilon_0^*)}$.

Simple calculations yield that
\begin{align*}
\text{Lip} \left( \frac{f'}{(F_{f}')^2} \right) & \leq \frac{1}{\omega^2(1-\varepsilon_0^*)^2}c+K'(\varepsilon_0^*,R,S),
%\max|T'|,\max|T''|),
\end{align*}
and
\begin{align*}
\text{Lip}\left(\frac{f\cdot F_{f}''}{(F_{f}')^3} \right) \leq \frac{(1+R)\max|T'|\varepsilon_0^*}{\omega^3(1-\varepsilon_0^*)^3}c+K''(\varepsilon_0^*,R,S).
\end{align*}
In conclusion,
\begin{align*}
\text{Lip}(\mathcal{F}_{\varepsilon}(f)')
&\leq
N \left(\frac{1}{\omega^3(1-\varepsilon_0^*)^3}+\frac{(1+R)\max|T'|\varepsilon_0^*}{\omega^4(1-\varepsilon_0^*)^4} \right)c+\frac{N}{\omega(1-\varepsilon_0^*)}(K'+K'') \\
&:=
q_1(\varepsilon_0^*,R)\cdot c+K_1(\varepsilon_0^*,R,S).
\end{align*}
As in the previous step,
one can choose $\varepsilon_0^*=\varepsilon_0^*(R)$ so small that $q_1(\varepsilon_0^*,R)<1$.
Let
\[
c^*=c^*(R,S):=\frac{K_1(\varepsilon_0^*(R),R,S)}{1-q_1(\varepsilon_0^*(R),R)},
\]
and suppose that $c\geq c^*$. Then $\text{Lip}(\mathcal{F}_\varepsilon(f)')\leq c$.
\end{proof}

\begin{lem} \label{lemma_cont}
$\mathcal{F}_{\varepsilon}|_{\mathcal{C}_{R,S}^c}$ is continuous in the $L^1$-norm.
\end{lem}
\begin{proof}
We remind the reader that $\mathcal{F}_{\varepsilon}(f)=P_{F_{f}}f=P_TP_{\Phi_{f}}f$. Continuity of $P_T$ is standard:
\[
\| P_Tf_1-P_Tf_2\|_1 = \|P_T(f_1-f_2)\|_1 \leq \|f_1-f_2\|_1.
\]
So it suffices to see the continuity of $\tilde{\mathcal{F}}_{\varepsilon}(f)=P_{\Phi_{f}}f$. Let $f_1,f_2 \in \mathcal{C}_{R,S}^c$, and for the sake of brevity we are going to write $\Phi_{f_1}=\Phi_1$ and $\Phi_{f_2}=\Phi_2$. Then
\begin{align*}
\|P_{\Phi_1}f_1-P_{\Phi_2}f_2 \|_1 &\leq \|P_{\Phi_1}(f_1-f_2) \|_1 + \|(P_{\Phi_1}-P_{\Phi_2})f_2 \|_1 \\
&\leq \|f_1-f_2 \|_1 + \|(P_{\Phi_1}-P_{\Phi_2})f_2 \|_1
\end{align*}

\begin{claim} \label{l1}
Let $f_1,f_2$ be of property (F) and $\varphi$ be of bounded variation on $\mathbb{T}$. Denote $\Phi_{f_1}=\Phi_1$ and $\Phi_{f_2}=\Phi_2$. Then there exists a $K > 0$ such that
\[
\|(P_{\Phi_1}-P_{\Phi_2})\varphi \|_1 \leq \varepsilon K \|\varphi\|_{BV} \|f_1-f_2\|_1.
\]
\end{claim}
This Claim implies that
\begin{align*}
\|\tilde{\mathcal{F}}_{\varepsilon}(f_1)-\tilde{\mathcal{F}}_{\varepsilon}(f_2)\|_1 &\leq \left(1+\varepsilon \cdot K\|f_2\|_{BV}
\right)\|f_1-f_2\|_1 \\
& \leq \left(1+\varepsilon \cdot \text{const}(R) \right)\|f_1-f_2\|_1,
\end{align*}
hence the Lemma is proved once we have this claim.

\emph{The proof of Claim~\ref{l1}.}
\begin{align*}
\|(P_{\Phi_1}-P_{\Phi_2})\varphi \|_1&=\int \left|\frac{\varphi}{\Phi'_1} \circ \Phi^{-1}_1-\frac{\varphi}{\Phi'_2} \circ \Phi^{-1}_2 \right|\text{ d}\lambda \\ &\leq \underbrace{\int \left|\frac{\varphi}{\Phi'_1} \circ \Phi^{-1}_1-\frac{\varphi}{\Phi'_2} \circ \Phi^{-1}_1 \right|\text{ d}\lambda}_{(A)}+\underbrace{\int \left|\frac{\varphi}{\Phi'_2} \circ \Phi^{-1}_2-\frac{\varphi}{\Phi'_2} \circ \Phi^{-1}_1 \right|\text{ d}\lambda}_{(B)}
\end{align*}

We first deal with the term $(A)$.

\begin{align*}
\left|\int \left(\frac{\varphi}{\Phi_1'} -\frac{\varphi}{\Phi_2'} \right) \circ \Phi_1^{-1}\right|\text{ d}\lambda&=\int |\varphi| \cdot \left|1 -\frac{\Phi_1'}{\Phi_2'} \right|\text{ d}\lambda \leq \|\varphi\|_{\infty} \int  \left|\frac{\Phi_2'-\Phi_1'}{\Phi_2'} \right|\text{ d}\lambda \\
&= \|\varphi\|_{BV} \int_0^1\left|\frac{\varepsilon(f_2(x+1/2)-f_1(x+1/2))}{1+\varepsilon(f_2(x+1/2)-1)} \right| \text{ d}x \\
&\leq \frac{\varepsilon }{1-\varepsilon}\|\varphi\|_{BV} \|f_1-f_2\|_1.
\end{align*}

Now we give an appropriate bound on $(B)$.

\begin{align*}
\int \left|\frac{\varphi}{\Phi_2'} \circ \Phi_2^{-1}-\frac{\varphi}{\Phi_2'} \circ \Phi_1^{-1} \right|\text{ d}\lambda&=\int|\varphi \circ \Phi_1^{-1} \circ \Phi_2-\varphi|\text{ d}\lambda=\int \psi\cdot (\varphi \circ \Phi_1^{-1} \circ \Phi_2-\varphi)\text{ d}\lambda \\
&=\int (P_{\Phi_1^{-1}}P_{\Phi_2}\psi-\psi)\varphi\text{ d}\lambda,
\end{align*}
where $\psi=\text{sign}(\varphi \circ \Phi_1^{-1} \circ \Phi_2-\varphi)$. Now we are going to apply {\cite[Lemma 11]{keller1982stochastic}}, which states that if $\ell \in BV$ and $h \in L^1$, then
\[
\left|\int \ell \cdot h \text{ d}\lambda \right| \leq \text{var}(\ell)\left \| \int (h) \right\|_{\infty}+\left|\int h \text{ d}\lambda \right| \cdot \|\ell\|_{\infty} \leq 2 \|\ell\|_{BV}\cdot\left \| \int (h) \right\|_{\infty},
\]
where $\int (h)=\int_{\{x \leq z\}}h(x)\text{ d}x$. Choosing $\ell=\varphi$ and $h=P_{\Phi_1^{-1}}P_{\Phi_2}\psi-\psi$ we get
\[
\int (P_{\Phi_1^{-1}}P_{\Phi_2}\psi-\psi)\varphi\text{ d}\lambda \leq 2 \|\varphi\|_{BV}\sup_{0 \leq z \leq 1}\left| \int (P_{\Phi_1^{-1}}P_{\Phi_2}\psi-\psi) \mathbf{1}_{[0,z]}  \right|\text{ d}\lambda.
\]
So
\begin{align*}
\int (P_{\Phi_1^{-1}}P_{\Phi_2}\psi-\psi)\varphi\text{ d}\lambda &\leq 2\|\varphi\|_{BV} \sup_{0 \leq z \leq 1}\left| \int \psi \mathbf{1}_{[0,z]} \circ \Phi_1^{-1} \circ \Phi_2-\psi \mathbf{1}_{[0,z]}  \right|\text{ d}\lambda \\
&\leq 2\|\varphi\|_{BV} \sup_{0 \leq z \leq 1} \int |\psi| |\mathbf{1}_{[0,z]} \circ \Phi_1^{-1} \circ \Phi_2-\mathbf{1}_{[0,z]}|\text{ d}\lambda \\
&= 2\|\varphi\|_{BV} \sup_{0 \leq z \leq 1} \int |\mathbf{1}_{[0,z]} \circ \Phi_1^{-1} -\mathbf{1}_{[0,z]} \circ \Phi_2^{-1}| \cdot \frac{1}{\Phi_2'\circ \Phi_2^{-1}}\text{ d}\lambda \\
& \leq \frac{2}{1-\varepsilon}\|\varphi\|_{BV} \sup_{0 \leq z \leq 1} \int |\mathbf{1}_{[\Phi_1(0),\Phi_1(z)]}-\mathbf{1}_{[\Phi_2(0),\Phi_2(z)]}|\text{ d}\lambda \\
& \leq \frac{2\|\varphi\|_{BV}}{1-\varepsilon} (|\Phi_1(0)-\Phi_2(0)| +\max_{0 \leq t \leq 1}|\Phi_1(t)-\Phi_2(t)|) \\
&\leq \frac{2\|\varphi\|_{BV}}{1-\varepsilon} 2\max_{0 \leq t \leq 1}|\Phi_1(t)-\Phi_2(t)| \\
& = \frac{4\|\varphi\|_{BV}}{1-\varepsilon} \max_{0 \leq t \leq 1} \varepsilon \int g(y-t)(f_1(y)-f_2(y))\text{ d}y \leq \frac{2\varepsilon}{1-\varepsilon}\|\varphi\|_{BV}\|f_1-f_2\|_1.
\end{align*}
\end{proof}
The final lemma, a corollary to the Arzel\`a-Ascoli theorem, is a folklore result on function spaces.
\begin{lem}
The space $\mathcal{C}_{R,S}^c$ is a compact, convex subset of
$C^0$ and a fortiori also of
$L^1$.
\end{lem}
By Schauder's fixed point theorem we can conclude that there exists a fixed point of $\mathcal{F}_{\varepsilon}$ in $\mathcal{C}_{R,S}^c$. This completes our proof of Proposition \ref{propfix}.
\end{proof}

\subsection{Convergence to the invariant density} We prove the following proposition in this section:
\begin{prop} \label{propconv}
Let $R>0$, $S>0$ and $c>0$.
Then there exist $C>0$, $\gamma\in(0,1)$ and an $\varepsilon^*(R,S,c)>0$ such that for $0 \leq \varepsilon < \varepsilon^*(R,S,c)$
\[
\|\mathcal{F}_{\varepsilon}^n(f_0)-f_*^{\varepsilon}\|_{BV} \leq C \gamma^n \|f_0-f_*^{\varepsilon}\|_{BV}\quad\text{for all }f_0 \in \mathcal{C}_{R,S}^c\text{ and }n\in\mathbb{N}.
\]
\end{prop}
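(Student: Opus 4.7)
The plan is to establish BV exponential convergence via the classical two-norm scheme for perturbed transfer operators: a Lasota-Yorke inequality for the differences in the strong norm ($\var$), combined with exponential decay in the weaker $L^1$ norm, both uniform in $\varepsilon$.

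\textbf{Step 1 (Difference Lasota-Yorke).} I will show that for all $f, g \in \mathcal{C}_{R,S}^c$ and $\varepsilon$ sufficiently small,
\[
\var(\mathcal{F}_\varepsilon(f) - \mathcal{F}_\varepsilon(g)) \leq \alpha\,\var(f - g) + B \|f - g\|_1
\]
with some $\alpha < 1$. The natural splitting $\mathcal{F}_\varepsilon(f) - \mathcal{F}_\varepsilon(g) = P_T P_{\Phi_f}(f - g) + P_T(P_{\Phi_f} - P_{\Phi_g})g$ handles the two contributions separately: for the first, the BV bound for $P_T$ (contracting by essentially $1/\omega$) composed with the estimate for $P_{\Phi_f}$ already carried out in Lemma~\ref{lem:1} gives a contraction factor strictly less than $1$ for $\varepsilon$ small, thanks to $N < \omega^2$. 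For the second, a variant of the estimate in Claim~\ref{l1} -- adapted to control $\var$ rather than $\|\cdot\|_1$, using that functions in $\mathcal{C}_{R,S}^c$ are $C^1$ -- produces a bound of size $\varepsilon \cdot \mathrm{const}(R,S,c) \cdot \|f - g\|_1$.

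\textbf{Step 2 (Exponential $L^1$ decay).} Writing $h_n := \mathcal{F}_\varepsilon^n(f_0) - f_*^\varepsilon$, which satisfies $\int h_n\,\text{d}\lambda = 0$, I exploit the recursion
\[
h_{n+1} = P_T h_n + P_T(P_{\Phi_{f_n}} - I) h_n + P_T(P_{\Phi_{f_n}} - P_{\Phi_{f_*^\varepsilon}}) f_*^\varepsilon,
\]
which isolates a leading $P_T h_n$ -- contracted by the spectral gap of $P_T$ on zero-mean BV functions (valid since $T$ is a $C^2$ mixing expanding map of $\mathbb{T}$) -- plus two $O(\varepsilon)$ errors. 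The middle term is $O(\varepsilon \|h_n\|_{BV})$ in $L^1$ by Claim~\ref{l1} applied with $\Phi_2 = \mathrm{id}$ (which is exactly $\Phi$ at the Lebesgue density, since $\int g = 0$), and the last term is bounded directly by Claim~\ref{l1} with $\varphi = f_*^\varepsilon$. Combined with the uniform BV bound $\|h_n\|_{BV} \leq 2(1+R)$ from Lemma~\ref{lem:1}, a bootstrap argument (or equivalently a perturbative spectral argument for the linearisation $D\mathcal{F}_\varepsilon(f_*^\varepsilon) = P_T + O(\varepsilon)$) yields $\|h_n\|_1 \leq C \gamma_0^n \|h_0\|_{BV}$ for some $\gamma_0 \in (0,1)$, provided $\varepsilon^*$ is small enough.

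\textbf{Step 3 and main obstacle.} Substituting the $L^1$ decay of Step 2 into the iterated inequality of Step 1 gives
\[
\var(h_n) \leq \alpha^n\var(h_0) + B\sum_{k=0}^{n-1} \alpha^{n-1-k} \|h_k\|_1 \leq C' \cdot \max(\alpha,\gamma_0)^n \|h_0\|_{BV}
\]
(with a harmless polynomial correction if $\alpha = \gamma_0$), which together with the $L^1$ decay delivers the desired BV convergence. The delicate step is clearly Step 2: because $\mathcal{F}_\varepsilon$ is nonlinear, one cannot directly appeal to spectral theory for $\mathcal{F}_\varepsilon$ itself; instead, the spectral gap of the linear operator $P_T$ must be propagated carefully along the nonlinear orbit. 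This is enabled precisely by the uniform BV bounds of Lemma~\ref{lem:1} and by the smallness of the coupling-induced perturbations in the weak norm, and the discontinuity of $g$ causes no difficulty here since after one application of $\Phi_f$ the iterate is again in the smooth class $\mathcal{C}_{R,S}^c$.
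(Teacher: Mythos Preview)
Your overall two-norm strategy is sound, but there are two genuine gaps that prevent the argument from closing as written.

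\textbf{Step 1.} Your claim that $\var\bigl(P_T(P_{\Phi_f}-P_{\Phi_g})g\bigr)\le \varepsilon\cdot\mathrm{const}\cdot\|f-g\|_1$ is not correct. When you compute $\var\bigl((P_{\Phi_f}-P_{\Phi_g})g\bigr)$, one of the terms involves $\Phi_f''-\Phi_g''=\varepsilon\bigl(f'(\cdot+\tfrac12)-g'(\cdot+\tfrac12)\bigr)$, and its $L^1$ norm is $\varepsilon\,\var(f-g)$, not $\varepsilon\|f-g\|_1$. This is precisely term $(D12)$ in the paper's Claim~\ref{claim2}, and it forces the bound $\varepsilon K\|f-g\|_{BV}$ rather than $\varepsilon K\|f-g\|_1$. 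Fortunately this is not fatal: absorbing the extra $\varepsilon K\,\var(f-g)$ into the contraction coefficient still yields a Lasota--Yorke inequality with factor $\alpha+\varepsilon K<1$.

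\textbf{Step 2.} This is where the real problem lies. Your recursion $h_{n+1}=P_T h_n+e_n$ with $\|e_n\|_1\le\varepsilon C\|h_n\|_{BV}$ does not, together with the \emph{uniform} bound $\|h_n\|_{BV}\le 2(1+R)$, yield exponential $L^1$ decay. Iterating gives $h_n=P_T^n h_0+\sum_{k=0}^{n-1}P_T^{n-1-k}e_k$; since $P_T$ is only non-expansive on $L^1$, the best you obtain is
\[
\|h_n\|_1\le Cr^n\|h_0\|_{BV}+\varepsilon C\sum_{k=0}^{n-1}\|h_k\|_{BV}\le Cr^n\|h_0\|_{BV}+O(\varepsilon n),
\]
which does not decay. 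Using the BV spectral gap on $P_T^{n-1-k}e_k$ instead requires $\|e_k\|_{BV}$, but the middle term $(P_{\Phi_{f_k}}-I)h_k$ has BV norm of order $\varepsilon(S+c)$ (it involves $h_k'$ and $\text{Lip}(h_k')$, which are uniformly bounded but do not scale with $\|h_k\|_{BV}$), so again the argument stalls at a ball of radius $O(\varepsilon)$ rather than at zero. The linearisation alternative you mention would at best give \emph{local} convergence, whereas the proposition is global on $\mathcal{C}_{R,S}^c$.

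\textbf{What the paper does differently.} The paper avoids splitting $P_T$ from $P_{\Phi}$. It keeps the product together and first proves (Lemma~\ref{lemma8}) that any composition $\mathcal Q_{\varepsilon,n}=P_TP_{\Phi_n}\cdots P_TP_{\Phi_1}$ contracts on $BV_0$, by comparing it to $(P_TP_{\Phi_*})^N$ via the uniform Lasota--Yorke inequality and the $BV\to L^1$ smallness of $P_{\Phi_k}-P_{\Phi_*}$ (Claim~\ref{l1}). Then the decomposition
\[
f_{n+1}-f_*^\varepsilon=\mathcal Q_{\varepsilon,n+1}(f_0-f_*^\varepsilon)+\sum_{k=0}^{n}\mathcal Q_{\varepsilon,n-k}\,P_T(P_{\Phi_k}-P_{\Phi_*})f_*^\varepsilon
\]
has only one type of error term, acting on the fixed density $f_*^\varepsilon\in\mathcal C_{R,S}^c$; Claim~\ref{claim2} bounds it in BV by $\varepsilon K\|f_k-f_*^\varepsilon\|_{BV}$, and an induction in $n$ closes the loop. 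The point is that by never writing $(P_{\Phi_{f_k}}-I)h_k$, the paper ensures every error term is proportional to $\|h_k\|_{BV}$ in the BV norm, which is exactly what your middle term fails to satisfy.
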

\begin{proof}
It is obviously enough to prove this proposition for sufficiently large $R,S$ and $c$.
In particular we can assume that $R>R^*,S>S^*,c>c^*$ and also $\varepsilon<\varepsilon_0^*$, where $R^*,S^*,c^*,\varepsilon_0^*$
are chosen as in Lemma~\ref{lem:1}

The following proof is strongly inspired by the proof of \cite[Theorem 4]{{keller1982stochastic}}. We start by proving a lemma similar to \cite[Lemma 8]{{keller1982stochastic}}. 

\begin{lem} \label{lemma8}
There exist $0 < \varepsilon_1^* < \varepsilon^*_0$, $\beta<1$ and $c_1>0$ such that for $0 \leq \varepsilon < \varepsilon_1^*$
\[
\| P_TP_{\Phi_{n}}\dots P_TP_{\Phi_{1}}u\|_{BV} \leq c_1 \cdot \beta^n \|u\|_{BV}
\]
for any $\Phi_{i}=\Phi_{f_i}$ for which $f_i \in \mathcal{C}_{R,S}^c$,
any
$u \in BV_{0}=\{v \in BV, \text{ } \int v\text{ d}\lambda=0 \}$ and any $n \in \mathbb{N}$.
\end{lem}

\begin{proof}
Let $\mathcal{Q}_{\varepsilon,n}=P_TP_{\Phi_{n}}\dots P_TP_{\Phi_{1}}$, and let $\Phi_*$ be the coupling function associated to the invariant density $f_*^{\varepsilon}$. The lemma would be immediate if $f_i=f_*^{\varepsilon}$ would hold for all of the densities. So what we need to show is that $\mathcal{Q}_{\varepsilon,n}$ is close to $(P_TP_{\Phi_*})^n$ in a suitable sense.

The proof has three main ingredients: we first show that $\mathcal{Q}_{\varepsilon,n}: BV_0 \to BV_0$ is a uniformly bounded operator. The second fact we are going to see is that $(P_TP_{\Phi_*})^N: BV_0 \to L^1$ is a bounded operator and the operator norm can be made suitably small by choosing $N$ large enough. Lastly, referring to Claim~\ref{l1} we argue that the norm of $P_{\Phi_k}-P_{\Phi_*}: BV_0 \to L^1$ is of order $\varepsilon$. These three facts will imply that $\mathcal{Q}_{\varepsilon,n}$ is a contraction on $BV_0$ for $n$ large enough.

Remember that $P_T P_{\Phi_k}=P_{F_k}$, where $F_k$ is a $C^2$ expanding map of $\mathbb{T}$. Hence it is mixing and satisfies a Lasota-Yorke type inequality. More precisely,
\begin{align} \label{ly}
\|P_T P_{\Phi_k} u\|_{BV} &= \|P_{F_k} u\|_{BV} \leq \|u\|_1+\frac{1}{\inf|F_k'|}\text{var}(u)+\max_{ i=1,\dots,N}\sup \frac{|(F^{-1,i}_k)''|}{|(F^{-1,i}_k)'|}\|u\|_1 \nonumber \\
& \leq \frac{1}{\omega(1-\varepsilon)}\|u\|_{BV}+\left(1+\tilde{D} \right)\|u\|_1.
\end{align}
where $\tilde{D}=\max_{ i=1,\dots,N}\sup \frac{|(F^{-1,i}_k)''|}{|(F^{-1,i}_k)'|}$. Let
\begin{equation}\label{eq:GK1}
\alpha=\frac{1}{\omega(1-\varepsilon_0^*)} \quad \text{and} \quad K_0=\frac{1+\tilde{D}}{1-\alpha}.
\end{equation}
Our assumptions on $\varepsilon_0^*$ already provide that $\omega(1-\varepsilon_0^*) > 1$, implying $\alpha < 1$. Then by applying \eqref{ly} repeatedly we get
\begin{equation}\label{eq:GK2}
\|\mathcal{Q}_{\varepsilon,n} u\|_{BV} \leq \alpha^n \|u\|_{BV}+K_0\|u\|_1 \leq (\alpha^n+K_0)\|u\|_{BV}.
\end{equation}
Hence
\begin{equation} \label{unif}
\|\mathcal{Q}_{\varepsilon,n}\|_{BV} \leq K_0+1.
\end{equation}
 A consequence of the fact that $F_{f_*^{\varepsilon}}$ is mixing and $P_{f_*^{\varepsilon}}=P_TP_{\Phi_*}$ satisfies \eqref{ly} is that the spectrum of $P_TP_{\Phi_*}$ consists of the simple eigenvalue 1 and a part contained in a disc of radius $r < 1$ (see e.g.~\cite{baladi2000positive}). From this it follows that there exists an $N \in \mathbb{N}$ such that
\begin{equation}\label{eq:GK3}
\|(P_TP_{\Phi_*})^Nu\|_1 \leq \frac{1}{8K_0}\|u\|_{BV} \quad \forall u \in BV_0.
\end{equation}
Choose $N$ larger if necessary so that we also have
\[
\alpha^N(K_0+1) < \frac{1}{4}.
\]
Now according to Claim~\ref{l1} from the proof of Lemma~\ref{lemma_cont} we have
\[
\|(P_{\Phi_k}-P_{\Phi_*})u\|_1 \leq \text{const} \cdot \varepsilon \|u\|_{BV}\|f_k-f_*\|_1,
\]
where $f_k$ is the density corresponding to $\Phi_k$. Using this,
\begin{align*}
|\|\mathcal{Q}_{\varepsilon,N} u\|_1-\|(P_TP_{\Phi_{*}})^Nu\|_1| &\leq \sum_{j=1}^N \|P_TP_{\Phi_N}\dots P_TP_{\Phi_{j+1}}P_T(P_{\Phi_{j}}-P_{\Phi_{*}})(P_TP_{\Phi_{*}})^{j-1}u\|_1 \\
& \leq \text{const}_N \varepsilon \|u\|_{BV}.
\end{align*}
Combined with \eqref{eq:GK2} -- \eqref{eq:GK3} this implies for sufficiently small $\varepsilon_1^*$ (depending on $N$) and all $\varepsilon\in[0,\varepsilon_1^*)$
\begin{align*}
\|\mathcal{Q}_{\varepsilon,2N} u\|_{BV} &\leq \alpha^N\|\mathcal{Q}_{\varepsilon,N} u\|_{BV}+K_0\|\mathcal{Q}_{\varepsilon,N} u\|_1 \\
& \leq \alpha^N(K_0+1)\|u\|_{BV}+K_0\left(\frac{1}{8K_0}+\text{const}_N \varepsilon \right)\|u\|_{BV} \\
& \leq \frac{1}{2}\|u\|_{BV}.
\end{align*}
The lemma follows if one observes that $\mathcal{Q}_{\varepsilon,n}(BV_0) \subseteq BV_0$ and $\|\mathcal{Q}_{\varepsilon,n}\|_{BV} \leq K_0+1$ for all $n$ by \eqref{unif}.
\end{proof}

Now we move on to the proof of Proposition \ref{propconv}. We remind the reader of the notation $f_{n}=\mathcal{F}_{\varepsilon}^n(f_0)$. Write
\begin{align*}
f_{n+1}-f_*^{\varepsilon}&=P_TP_{\Phi_n}(f_n-f_*^{\varepsilon})+P_T(P_{\Phi_n}-P_{\Phi_*})f_*^{\varepsilon} \\
&\hspace{0.22cm}\vdots \\
&=P_TP_{\Phi_n}\dots P_TP_{\Phi_0}(f_0-f_*^{\varepsilon})+\sum_{k=0}^n P_TP_{\Phi_n}\dots P_TP_{\Phi_{k+1}}P_T(P_{\Phi_k}-P_{\Phi_*})f_*^{\varepsilon}
\end{align*}
Then Lemma \ref{lemma8} implies that
\begin{equation}\label{eq:GK4}
\|f_{n+1}-f_*^{\varepsilon}\|_{BV} \leq c_1 \beta^{n+1}\|f_0-f_*^{\varepsilon}\|_{BV}+c_1 \sum_{k=0}^n \beta^{n-k}\|P_T(P_{\Phi_k}-P_{\Phi_*})f_*^{\varepsilon}\|_{BV}.
\end{equation}

\begin{claim} \label{claim2} Let $\varphi \in \mathcal{C}_{R,S}^c$ and  $f_1,f_2 \in \tilde{\mathcal{C}}_{R,S}^c=\{ \text{var}(f) \leq R, |f'| \leq S, \text{Lip}(f') \leq c \}$. Denote $\Phi_{f_1}=\Phi_1$, $\Phi_{f_2}=\Phi_2$. Then
\[
\|(P_{\Phi_1}-P_{\Phi_2})\varphi\|_{BV} \leq \varepsilon K(R,S,c) \|f_1-f_2\|_{BV}
\]
for some constant $K=K(R,S,c)$.
\end{claim}

Suppose Claim~\ref{claim2} holds (we are going to prove it later). This implies that
\begin{align*}
\|P_T(P_{\Phi_k}-P_{\Phi_*})f_*^{\varepsilon}\|_{BV} \leq \varepsilon K\|P_T\|_{BV}\|f_k-f_*^{\varepsilon}\|_{BV}
\end{align*}

Choose $\gamma\in(\beta,1)$ and $C>c_1$ where $\beta$ and $c_1$ are the constants from \eqref{eq:GK4}. Then, by using induction, we get
\begin{align*}
\|f_{n+1}-f_*^{\varepsilon}\|_{BV} &\leq c_1 \beta^{n+1}\|f_0-f_*^{\varepsilon}\|_{BV}+c_1\varepsilon K(R,S,c) \|P_T\|_{BV} \sum_{k=0}^n \beta^{n-k} \|f_k-f_*^{\varepsilon}\|_{BV} \\
& \leq c_1 \beta^{n+1}\|f_0-f_*^{\varepsilon}\|_{BV}+\varepsilon c_2(R,S,c) \sum_{k=0}^n \beta^{n-k} C\gamma^k \|f_0-f_*^{\varepsilon}\|_{BV} \\
& \leq c_1 \beta^{n+1}\|f_0-f_*^{\varepsilon}\|_{BV}+\varepsilon c_2(R,S,c) C \beta^n \|f_0-f_*^{\varepsilon}\|_{BV} \sum_{k=0}^n \left( \frac{\gamma}{\beta}\right)^k \\
& \leq c_1 \gamma^{n+1}\|f_0-f_*^{\varepsilon}\|_{BV}+\varepsilon c_3(R,S,c) C \gamma^n \|f_0-f_*^{\varepsilon}\|_{BV},
\end{align*}
so if we choose $\varepsilon^*(R,S,c):=\min\left\{\varepsilon_1^*, \frac{\gamma(C-c_1)}{c_3(R,S,c)C}\right\}$, then
\[
\|f_{n+1}-f_*^{\varepsilon}\|_{BV} \leq C \gamma^{n+1} \|f_0-f_*^{\varepsilon}\|_{BV}.
\]
This concludes the proof of Proposition~\ref{propconv} and thus the proof of Theorem~\ref{theomain}. What is left is the proof of Claim~\ref{claim2}.

\emph{Proof of Claim~\ref{claim2}.} First note that as $\int (P_{\Phi_1}-P_{\Phi_2})\varphi\text{ d}\lambda=0$,
\[
\|(P_{\Phi_1}-P_{\Phi_2})\varphi\|_{BV} \leq \frac{3}{2} \var((P_{\Phi_1}-P_{\Phi_2})\varphi),
\]
so we only need to give the appropriate bound on the total variation.
\begin{align*}
\text{var}((P_{\Phi_1}-P_{\Phi_2})\varphi)&=\int \left |
\left( \frac{\varphi}{\Phi'_1} \circ \Phi^{-1}_1-\frac{\varphi}{\Phi'_2} \circ \Phi^{-1}_2 \right)' \right|\text{ d}\lambda
\\
&=\int \left | \frac{\varphi'}{(\Phi'_1)^2} \circ \Phi^{-1}_1-\frac{\varphi'}{(\Phi'_2)^2} \circ \Phi^{-1}_2 +\frac{\varphi \cdot \Phi_2''}{(\Phi_2')^3} \circ \Phi_2^{-1}-\frac{\varphi \cdot \Phi_1''}{(\Phi_1')^3} \circ \Phi_1^{-1}  \right|\text{ d}\lambda \\
& \leq \underbrace{\int \left | \frac{\varphi'}{(\Phi'_1)^2} \circ \Phi^{-1}_1-\frac{\varphi'}{(\Phi'_2)^2} \circ \Phi^{-1}_2 \right|\text{ d}\lambda}_{(C)} + \underbrace{\int \left | \frac{\varphi \cdot \Phi_2''}{(\Phi_2')^3} \circ \Phi_2^{-1}-\frac{\varphi \cdot \Phi_1''}{(\Phi_1')^3} \circ \Phi_1^{-1}  \right|\text{ d}\lambda}_{(D)}
\end{align*}
We start by giving a bound for the term $(C)$.
\[
(C) \leq \int \underbrace{\left | \frac{\varphi'}{(\Phi'_1)^2} \circ \Phi^{-1}_1-\frac{\varphi'}{(\Phi'_2)^2} \circ \Phi^{-1}_1 \right|\text{ d}\lambda}_{(C1)}+\underbrace{\int \left | \frac{\varphi'}{(\Phi'_2)^2} \circ \Phi^{-1}_1-\frac{\varphi'}{(\Phi'_2)^2} \circ \Phi^{-1}_2 \right|\text{ d}\lambda}_{(C2)}
\]
As for $(C1)$,
\begin{align*}
(C1) &=\int |\varphi'| \cdot \left|\frac{1}{\Phi_1'}-\frac{\Phi_1'}{(\Phi_2')^2} \right|\text{ d}\lambda=\int |\varphi'| \cdot \left|\frac{(\Phi_2')^2-(\Phi_1')^2}{(\Phi_2')^2\Phi_1'} \right|\text{ d}\lambda \\
&\leq \max \left| \frac{\varphi'(\Phi_1'+\Phi_2')}{(\Phi_2')^2 \Phi_1'}\right|\int |\Phi_1'-\Phi_2'|\text{ d}\lambda
 \leq \frac{2S(1+\varepsilon R)}{(1-\varepsilon R)^3} \cdot \varepsilon \|f_1-f_2\|_1 \leq \varepsilon K_{C1}\|f_1-f_2\|_{BV}.
\end{align*}
Note that $1-\varepsilon R > 0$ by our choice of $R$ and $\varepsilon$. $(C2)$ can be bounded the same way as term (A) in the proof of Claim~\ref{l1}, so we have
\begin{align*}
(C2) & \leq \left \|\frac{\varphi'}{\Phi_2'} \right \|_{BV} \frac{2\varepsilon}{1-\varepsilon R}\|f_1-f_2\|_1 \\
&\leq \left(\left \|\frac{\varphi'}{\Phi_2'} \right \|_{1}+\sup|\varphi'|\var\left(\frac{1}{\Phi_2'}\right)+\var|\varphi'|\sup\left(\frac{1}{\Phi_2'}\right) \right)  \frac{2\varepsilon}{1-\varepsilon R}\|f_1-f_2\|_1 \\
& \leq \frac{2\varepsilon(R+S^2\varepsilon/(1-\varepsilon)+c)}{1-\varepsilon R}\|f_1-f_2\|_1 \leq \varepsilon K_{C2}\|f_1-f_2\|_{BV}.
\end{align*}
We now move on to bounding $(D)$.
\begin{align*}
(D) &\leq \underbrace{\int \left | \frac{\varphi \cdot \Phi_2''}{(\Phi_2')^3} \circ \Phi_2^{-1}-\frac{\varphi \cdot \Phi_1''}{(\Phi_1')^3} \circ \Phi_2^{-1}  \right|\text{ d}\lambda}_{(D1)}+\underbrace{\int \left | \frac{\varphi \cdot \Phi_1''}{(\Phi_1')^3} \circ \Phi_2^{-1}-\frac{\varphi \cdot \Phi_1''}{(\Phi_1')^3} \circ \Phi_1^{-1}  \right|\text{ d}\lambda}_{(D2)}
\end{align*}
We start with $(D1)$.
\begin{align*}
(D1)&=\int \left | \frac{\varphi \cdot \Phi_2''}{(\Phi_2')^2}-\frac{\varphi \cdot \Phi_1''}{(\Phi_1')^3} \cdot \Phi_2'  \right|\text{ d}\lambda=\int |\varphi(y)| \cdot \left| \frac{\varepsilon f_2' \left(y+\frac{1}{2}\right)}{[\Phi_2'(y)]^2}-\frac{\varepsilon f_1' \left(y+\frac{1}{2}\right)\Phi_2'(y)}{[\Phi_1'(y)]^3} \right|\text{ d}y  \\
&\leq \varepsilon \max \left|\frac{\varphi}{(\Phi_2')^2(\Phi_1')^3} \right|\int |f_2'\left(y+\frac{1}{2}\right)[\Phi_1'(y)]^3-f_1'\left(y+\frac{1}{2}\right)[\Phi_2'(y)]^3|\text{ d}y \\
& \leq \frac{\varepsilon(1+R)}{(1-\varepsilon R)^5}\bigg(\underbrace{\int|f_2'\left(y+\frac{1}{2}\right)([\Phi_1'(y)]^3-[\Phi_2'(y)]^3)|\text{ d}y}_{(D11)}\\
&+\underbrace{\int |[f_1'\left(y+\frac{1}{2}\right)-f_2'\left(y+\frac{1}{2}\right)][\Phi_2'(y)]^3|\text{ d}y}_{(D12)} \bigg)
\end{align*}
We can bound $(D11)$ and $(D12)$ in the following way:
\begin{align*}
(D11) & \leq \max|f_2'((\Phi_1')^2-2\Phi_1'\Phi_2'+(\Phi_2')^2)| \int |\Phi_1'-\Phi_2'|\text{ d}\lambda \leq 4S(1+\varepsilon R)^2 \varepsilon \|f_1-f_2\|_1 \\
(D12) & \leq \max|(\Phi_2')^3|\var(f_1-f_2) \leq (1+\varepsilon R)^3 \|f_1-f_2\|_{BV}
\end{align*}
Summarizing the bound for $(D1)$ we see that
\[
(D1) \leq \varepsilon K_{D1}\|f_1-f_2\|_{BV}.
\]
The last term left to bound is $(D2)$. This term can be bounded as term (A) in the proof of Claim~\ref{l1}, so we have
\begin{align*}
(D2) &\leq 2\left \| \frac{\varphi \cdot \Phi_1''}{(\Phi_1')^2} \right \|_{BV} \frac{\varepsilon}{1-\varepsilon R} \|f_1-f_2\|_1 \leq 2\left ( \max \left| \frac{\varphi \cdot \Phi_1''}{(\Phi_1')^2} \right|+ \text{Lip} \left( \frac{\varphi \cdot \Phi_1''}{(\Phi_1')^2} \right) \right ) \frac{\varepsilon}{1-\varepsilon R} \|f_1-f_2\|_1 \\
& \leq 2 \left ( \frac{\varepsilon (1+R)S}{(1-\varepsilon R)^2}+ \varepsilon K(R,S) \right ) \frac{\varepsilon}{1-\varepsilon R} \|f_1-f_2\|_1 \\
&\leq \varepsilon K_{D2}\|f_1-f_2\|_{BV}
\end{align*}
In conclusion,
\begin{align*}
\|(P_{\Phi_1}-P_{\Phi_2})\varphi\|_{BV} &\leq \varepsilon \cdot \frac{3}{2}(K_{C1}+K_{C2}+K_{D1}+K_{D2})\|f_1-f_2\|_{BV}.
\end{align*}
\end{proof}

\section{Proof of Theorem~\ref{stability}} \label{s:theo2proof}

We remind the reader of Theorem~\ref{stability}: It states that when $R,S,c$ and $\varepsilon^*$ are chosen as in Theorem~\ref{theomain}, then for all $0 \leq \varepsilon,\varepsilon' < \varepsilon^{*}$ we have
\[
\|f_*^{\varepsilon}-f_*^{\varepsilon'}\|_{BV} \leq K|\varepsilon-\varepsilon'|,
\]
for some $K>0$, where $f_*^{\varepsilon}$ and $f_*^{\varepsilon'}$ are the fixed densities of $\mathcal{F}_{\varepsilon}$ and $\mathcal{F}_{\varepsilon'}$, respectively.

We now proceed to prove this. First observe that the choice of $R,S$ and $c$ implies that the fixed density of $\mathcal{F}_{\varepsilon}$ is in $\mathcal{C}_{R,S}^c$ for all $0 \leq \varepsilon < \varepsilon^*$. In particular, $f_*^{\varepsilon'} \in \mathcal{C}_{R,S}^c$.

Let $f$ be an element of $\mathcal{C}_{R,S}^c$. It is a consequence of Theorem~\ref{theomain} that there exists an $N \in \mathbb{N}$ such that
\[
\|\mathcal{F}_{\varepsilon'}^Nf-f_*^{\varepsilon'}\|_{BV} \leq \lambda\|f-f_*^{\varepsilon'}\|_{BV}
\]
for some $0 < \lambda < 1$.

This implies that
\begin{align*}
\|\mathcal{F}_{\varepsilon}^Nf-f_*^{\varepsilon'}\|_{BV} &\leq \|\mathcal{F}_{\varepsilon}^Nf-\mathcal{F}_{\varepsilon'}^N f\|_{BV}+\|\mathcal{F}_{\varepsilon'}^Nf-f_*^{\varepsilon'}\|_{BV} \\
& \leq \|\mathcal{F}_{\varepsilon}^Nf-\mathcal{F}_{\varepsilon'}^N f\|_{BV}+\lambda\|f-f_*^{\varepsilon'}\|_{BV}.
\end{align*}

\begin{lem} \label{iterN_general}
For each $N \in \mathbb{N}$ there exists an $a_N > 0$ such that if $f \in \mathcal{C}_{R,S}^c$ and $0 \leq \varepsilon , \varepsilon' < \varepsilon^*$, then $\|\mathcal{F}_{\varepsilon}^Nf-\mathcal{F}_{\varepsilon'}^N f\|_{BV} \leq a_N |\varepsilon-\varepsilon'|$.
\end{lem}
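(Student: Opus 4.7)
The plan is an induction on $N$, resting on two self-contained ingredients. First, a base case ($N=1$) of the form $\|\mathcal{F}_\varepsilon f-\mathcal{F}_{\varepsilon'}f\|_{BV}\leq a_1|\varepsilon-\varepsilon'|$ for every $f\in\mathcal{C}_{R,S}^c$. Second, a BV-Lipschitz bound $\|\mathcal{F}_\varepsilon f_1-\mathcal{F}_\varepsilon f_2\|_{BV}\leq L\|f_1-f_2\|_{BV}$ valid for $f_1,f_2\in\mathcal{C}_{R,S}^c$, with $L=L(R,S,c)$ independent of $\varepsilon$.

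The base case is the analogue of Claim~\ref{claim2} in which ``same $\varepsilon$, different densities'' is exchanged for ``same density, different $\varepsilon$''. I would write $\mathcal{F}_\varepsilon f-\mathcal{F}_{\varepsilon'}f=P_T(P_{\Phi_{f,\varepsilon}}-P_{\Phi_{f,\varepsilon'}})f$ and exploit that each of $\Phi_{f,\varepsilon}-\Phi_{f,\varepsilon'}$, $\Phi_{f,\varepsilon}'-\Phi_{f,\varepsilon'}'$, and $\Phi_{f,\varepsilon}''-\Phi_{f,\varepsilon'}''$ factors as $(\varepsilon-\varepsilon')$ times an explicit expression in $f$, namely $\int g(\cdot-y)f(y)\,\mathrm{d}y$, $f(\cdot+1/2)-1$, and $f'(\cdot+1/2)$, each having $C^0$-norm controlled by $R,S,c$. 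Running the block-by-block decomposition from the proof of Claim~\ref{claim2} then yields the desired bound, every density-difference factor being replaced by $|\varepsilon-\varepsilon'|$ times a polynomial in $R,S,c$. For the second ingredient I would decompose
\[
\mathcal{F}_\varepsilon f_1-\mathcal{F}_\varepsilon f_2=P_T\bigl[(P_{\Phi_{f_1}}-P_{\Phi_{f_2}})f_1\bigr]+P_TP_{\Phi_{f_2}}(f_1-f_2),
\]
apply Claim~\ref{claim2} with $\varphi=f_1\in\mathcal{C}_{R,S}^c$ to the first summand, and use the Lasota--Yorke bound \eqref{ly} to control $\|P_TP_{\Phi_{f_2}}\|_{BV}$ uniformly on $\mathcal{C}_{R,S}^c$ for the second.

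Given these two ingredients, the induction runs cleanly. Writing
\[
\mathcal{F}_\varepsilon^{N+1}f-\mathcal{F}_{\varepsilon'}^{N+1}f = \bigl[\mathcal{F}_\varepsilon(\mathcal{F}_\varepsilon^Nf)-\mathcal{F}_\varepsilon(\mathcal{F}_{\varepsilon'}^Nf)\bigr] + \bigl[\mathcal{F}_\varepsilon(\mathcal{F}_{\varepsilon'}^Nf)-\mathcal{F}_{\varepsilon'}(\mathcal{F}_{\varepsilon'}^Nf)\bigr],
\]
and using that both $\mathcal{F}_\varepsilon^Nf$ and $\mathcal{F}_{\varepsilon'}^Nf$ belong to $\mathcal{C}_{R,S}^c$ by Lemma~\ref{lem:1}, the BV-Lipschitz bound combined with the induction hypothesis gives $\leq L\,a_N|\varepsilon-\varepsilon'|$ for the first bracket, while the base case yields $\leq a_1|\varepsilon-\varepsilon'|$ for the second. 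The recursion $a_{N+1}=L\,a_N+a_1$ closes the induction.

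The main technical obstacle is the base case: one must retrace each of the subestimates $(A),(B),(C1),(C2),(D1),(D2)$ in the proof of Claim~\ref{claim2}, identify which factor now carries $(\varepsilon-\varepsilon')$, and ensure that the remaining constants depend only on $R,S,c$. The subtle point is that $\varepsilon$ appears simultaneously inside the transfer operator via $\Phi'_{f,\varepsilon}=1+\varepsilon(f(\cdot+1/2)-1)$ and as the perturbation parameter; however, since the denominators $\Phi'$ stay uniformly bounded away from zero on the admissible parameter range, this bookkeeping causes no substantive difficulty. Apart from this, no new ideas beyond those already present in Section~\ref{s:theo1proof} are needed.
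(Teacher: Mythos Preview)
Your induction scheme is correct and matches the paper's in structure: base case, a BV-Lipschitz bound for $\mathcal{F}_\varepsilon$ on $\mathcal{C}_{R,S}^c$, and the same telescoping split (the paper merely reverses the order of the two brackets). The Lipschitz ingredient is handled identically in both: decompose $\mathcal{F}_\varepsilon f_1-\mathcal{F}_\varepsilon f_2$, apply Claim~\ref{claim2} to the operator-difference term, and bound $\|P_TP_{\Phi}\|_{BV}$ uniformly (the paper invokes Lemma~\ref{lemma8} with $n=1$, which amounts to your use of \eqref{ly}).

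The one genuine difference is the base case. You propose to retrace the block-by-block estimates $(A),(B),(C1),(C2),(D1),(D2)$ with the roles of ``density difference'' and ``$\varepsilon$-difference'' swapped. That works, but the paper avoids all of it with a one-line observation: since $\Phi_f^{\varepsilon'}=\Phi_{\varepsilon'f/\varepsilon}^{\varepsilon}$, the comparison of two coupling strengths at the \emph{same} density $f$ is literally a comparison of two densities $f$ and $\varepsilon'f/\varepsilon$ at the \emph{same} coupling strength $\varepsilon$. Claim~\ref{claim2} then applies verbatim (note $\varepsilon'f/\varepsilon\in\tilde{\mathcal{C}}_{R,S}^c$ because $\varepsilon'/\varepsilon\leq 1$), and the factor $\varepsilon\|f-\varepsilon'f/\varepsilon\|_{BV}=|\varepsilon-\varepsilon'|\,\|f\|_{BV}$ falls out immediately. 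Your route costs an extra page of bookkeeping; the paper's trick buys that page back for free.
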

We are going to give the proof of this lemma later. Using this result we have that
\[
\|\mathcal{F}_{\varepsilon}^Nf-f^{\varepsilon'}_*\|_{BV} \leq a_N |\varepsilon-\varepsilon'|+\lambda\|f-f_*^{\varepsilon'}\|_{BV}.
\]
Let $B(h,r)$ denote the ball in $BV$ with center $h$ and radius $r$. We claim that $\mathcal{F}_{\varepsilon}^N$ leaves the ball $B(f_*^{\varepsilon'},\bar{r})$ invariant, where $\bar{r}=\frac{a_N |\varepsilon-\varepsilon'|}{1-\lambda}$. To see this, suppose that $\|f-f_*^{\varepsilon'}\|_{BV} \leq \bar{r}$. Then indeed,
\[
\|\mathcal{F}_{\varepsilon}^Nf-f_*^{\varepsilon'}\|_{BV} \leq a_N|\varepsilon-\varepsilon'|+\lambda \bar{r}=a_N|\varepsilon-\varepsilon'|+\frac{\lambda a_N|\varepsilon-\varepsilon'|}{1-\lambda}=\frac{a_N|\varepsilon-\varepsilon'|}{1-\lambda}=\bar{r}.
\]
Note that Theorem~\ref{theomain} implies that
$\lim_{k\to\infty}\|\mathcal{F}^{kN}(f_0)-f^\varepsilon_*\|_{BV}=0$ for all $f_0\in C^c_{R,S}$. Note further that $B(f_*^{\varepsilon'},\bar{r}) \cap \mathcal{C}_{R,S}^c \neq \emptyset$. Hence $f_*^{\varepsilon} \in B(f_*^{\varepsilon'},\bar{r})$. This means that
\[
\|f_*^{\varepsilon}-f_*^{\varepsilon'}\|_{BV} \leq \frac{a_N |\varepsilon-\varepsilon'|}{1-\lambda} \eqqcolon K|\varepsilon-\varepsilon'|,
\]
which is exactly the statement to prove, so we are only left to prove Lemma~\ref{iterN_general}.

\emph{Proof of Lemma~\ref{iterN_general}.} Suppose (without loss of generality) that $0 \leq \varepsilon' \leq \varepsilon$. Notice that if we use the notation
\[
\Phi^{\varepsilon}_f(x)=x+\varepsilon \int_{\mathbb{T}} g(y-x)f(y)\text{ d}y
\]
then
\[
\Phi^{\varepsilon'}_f=\Phi^{\varepsilon}_{\varepsilon'f/\varepsilon}.
\]
By Claim~\ref{claim2},
\begin{equation} \label{lenyeg_general}
\|(P_{\Phi_f^{\varepsilon}}-P_{\Phi_{\varepsilon'f/\varepsilon}^{\varepsilon}})\varphi\|_{BV} \leq K_1\varepsilon\|f-\varepsilon'f/\varepsilon\|_{BV}=K_1 |\varepsilon-\varepsilon'|\|f\|_{BV}
\end{equation}
holds. (Here we have used the fact that $\varepsilon'/\varepsilon \leq 1$, so $\varepsilon'f/\varepsilon \in \mathcal{C}^c_{R,S}$, hence Claim~\ref{claim2} can be applied indeed.) $P_T$ is a bounded operator on $BV$, let $\|P_T\|_{BV} \leq K_2$. This implies that
\begin{align} \label{N1_general}
\|\mathcal{F}_{\varepsilon}f-\mathcal{F}_{\varepsilon'}f\|_{BV} 
&=
\|P_TP_{\Phi_f^\varepsilon}f-P_TP_{\Phi_{\varepsilon'f/\varepsilon}^\varepsilon}f\|_{BV} \leq \bar K|\varepsilon-\varepsilon'|\|f\|_{BV}, 
\end{align}
where $\bar{K}=K_1K_2$.

Now we prove the lemma using induction. Since $\|f\|_{BV} \leq 1+R$, the case $N=1$ holds by \eqref{N1_general} with the choice of $a_1=\bar{K}(1+R)$. Assume that
\begin{equation} \label{Nmin1_general}
\|\mathcal{F}_{\varepsilon}^{N-1}f-\mathcal{F}_{\varepsilon'}^{N-1}f\|_{BV} \leq a_{N-1}|\varepsilon-\varepsilon'|.
\end{equation}
Then using the $N=1$ case
\begin{align*}
\|\mathcal{F}_{\varepsilon}^{N}f-\mathcal{F}_{\varepsilon'}^{N}f\|_{BV}
&\leq
\|\mathcal{F}_{\varepsilon}\mathcal{F}_{\varepsilon}^{N-1}f-\mathcal{F}_{\varepsilon'}\mathcal{F}_{\varepsilon}^{N-1}f\|_{BV}+\|\mathcal{F}_{\varepsilon'}\mathcal{F}_{\varepsilon}^{N-1}f-\mathcal{F}_{\varepsilon'}\mathcal{F}_{\varepsilon'}^{N-1}f\|_{BV} \\
& \leq a_1 |\varepsilon-\varepsilon'| + \|\mathcal{F}_{\varepsilon'}\mathcal{F}_{\varepsilon}^{N-1}f-\mathcal{F}_{\varepsilon'}\mathcal{F}_{\varepsilon'}^{N-1}f\|_{BV}.
\end{align*}
Let $\varphi=\mathcal{F}_{\varepsilon}^{N-1}f$ and $\psi=\mathcal{F}_{\varepsilon'}^{N-1}f$. Then
\begin{align*}
\|\mathcal{F}_{\varepsilon'}\mathcal{F}_{\varepsilon}^{N-1}f-\mathcal{F}_{\varepsilon'}\mathcal{F}_{\varepsilon'}^{N-1}f\|_{BV}
&=\|P_TP_{\Phi_{\varphi}^{\varepsilon'}}\varphi-P_TP_{\Phi_{\psi}^{\varepsilon'}}\psi\|_{BV} \\
& \leq \|P_TP_{\Phi_{\varphi}^{\varepsilon'}}(\varphi-\psi)\|_{BV}+\|P_T(P_{\Phi_{\varphi}^{\varepsilon'}}-P_{\Phi_{\psi}^{\varepsilon'}})\psi\|_{BV} \\
& \leq  (c_1 \cdot \beta+\varepsilon' \bar{K}) \|\varphi-\psi\|_{BV}
\end{align*}
by using Lemma~\ref{lemma8} with $n=1$ for the first term and Claim~\ref{claim2} for the second term in line two. Let $\bar{A}= c_1 \cdot \beta+\varepsilon^* \bar{K}$. Now we can proceed in the following way:
\begin{align*}
\|\mathcal{F}_{\varepsilon}^{N}f-\mathcal{F}_{\varepsilon'}^{N}f\|_{BV}
& \leq a_1 |\varepsilon-\varepsilon'| + \bar{A} \|\mathcal{F}_{\varepsilon}^{N-1}f-\mathcal{F}_{\varepsilon'}^{N-1}f\|_{BV} \\
& \leq a_1 |\varepsilon-\varepsilon'| + \bar{A} a_{N-1} |\varepsilon-\varepsilon'|
\end{align*}
by using \eqref{Nmin1_general}. So $a_N=a_1+ \bar{A} a_{N-1}$ is an appropriate choice.

Now Lemma~\ref{iterN_general} and thus Theorem~\ref{stability} are proved.

\section{Proof of Theorem \ref{largeeps}}
\label{s:theo3proof}

We start by proving the statement about the support of our initial density shrinking to a single point. Remember that we denoted the smallest closed interval containing the support of $f$ by $\supp^*(f)$. The assumption (F') implies that this interval has length less than 1/2.

\begin{prop} \label{propshrink}
Suppose $f_0$ is of property (F'). Let $\Omega=\max |T'|$, $f_n=\mathcal{F}_{\varepsilon}^n(f_0)$. Then
\[
1-\frac{1}{\Omega} < \varepsilon < 1 \qquad \Rightarrow \qquad |\supp^* (f_n)| \underset{n \to \infty}{\to} 0
\]
\end{prop}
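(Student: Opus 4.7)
The plan is to exploit the fact that, on an interval of length less than $\tfrac12$, the coupling kernel $g$ is simply the identity: if $x,y$ lie in an interval $[a,b]\subset\mathbb{T}$ with $b-a<\tfrac12$, then $y-x\in(-\tfrac12,\tfrac12)$, so $g(y-x)=y-x$. Consequently, whenever $\supp^*(f)=[a,b]$ with $|b-a|<\tfrac12$, the restriction of $\Phi_f$ to $[a,b]$ takes the affine form
\[
\Phi_f(x)=x+\varepsilon\!\int_a^b(y-x)f(y)\,dy=(1-\varepsilon)\,x+\varepsilon\,m,
\qquad m=\int y\,f(y)\,dy\in[a,b].
\]
Thus $\Phi_f$ acts on $[a,b]$ as a pure affine contraction toward the mean $m$, with contraction ratio exactly $1-\varepsilon$.

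The key geometric step is then: since $\Phi_f$ is a monotone $C^2$ diffeomorphism of $\mathbb{T}$ (already shown in Lemma~\ref{lem:1}), it maps the smallest interval containing the support of $f$ onto the smallest interval containing the support of $\Phi_f(\mu)$, and this image has length $(1-\varepsilon)|\supp^*(f)|$. Applying $T$ afterwards, one uses that $T$ is $C^2$ with $|T'|\le\Omega$: any interval $J\subset\mathbb{T}$ with $\Omega\,|J|<1$ is mapped to an honest interval $T(J)\subset\mathbb{T}$ of length at most $\Omega\,|J|$ (the image does not wrap around). Combining the two estimates,
\[
|\supp^*(f_{n+1})|\le \Omega(1-\varepsilon)\,|\supp^*(f_n)|.
\]
Under the hypothesis $\varepsilon>1-\tfrac{1}{\Omega}$ the factor $\Omega(1-\varepsilon)$ is strictly less than $1$, so iterating gives the claimed exponential decay.

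The one technical point to verify is an induction: that property (F') persists along the orbit, i.e.\ $|\supp^*(f_n)|<\tfrac12$ for every $n$, so that the linearisation $g(y-x)=y-x$ is legitimate at each step and the image under $T$ is indeed a proper interval of $\mathbb{T}$. This is immediate from the contraction factor $\Omega(1-\varepsilon)<1$: the initial bound $|\supp^*(f_0)|<\tfrac12$ is strictly decreased at every step, hence stays below $\tfrac12$ forever. I expect this bookkeeping, together with the check that $\Phi_{f_n}$ really does send the smallest enclosing interval to the smallest enclosing interval (a consequence of its monotonicity on the lift), to be the only subtle point; everything else follows from the simple affine computation above and the pointwise bound on $|T'|$. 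The quantitative statement in Theorem~\ref{largeeps}, $|\supp^*(f_n)|\le[\Omega(1-\varepsilon)]^n|\supp^*(f_0)|$, is then obtained directly by iterating the one-step bound.
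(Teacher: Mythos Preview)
Your proposal is correct and follows essentially the same approach as the paper: both show that $\Phi_{f_n}$ contracts $\supp^*(f_n)$ by the exact factor $1-\varepsilon$ and then $T$ expands by at most $\Omega$, yielding the one-step estimate $|\supp^*(f_{n+1})|\le\Omega(1-\varepsilon)|\supp^*(f_n)|$. The only cosmetic difference is that the paper obtains the contraction via the derivative formula $\Phi_{f_0}'(x)=1+\varepsilon(f_0(x+\tfrac12)-1)=1-\varepsilon$ on the support (using $f_0(x+\tfrac12)=0$ there), whereas you compute the affine form $\Phi_f(x)=(1-\varepsilon)x+\varepsilon m$ directly from $g(y-x)=y-x$; these are equivalent observations.
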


\begin{proof}
We are going to denote the length of an interval $I$ by $|I|$. First notice that $\supp^*(f_1)=F_{f_0}(\supp^*(f_0))$. Since $F_{f_0}$ is $C^2$, we have that
\begin{align*}
|\supp^*(f_1)|=|F_{f_0}(\supp^*(f_0))| &\leq \max_{x \in \supp^*(f_0)}|F'_{f_0}(x)||\supp^*(f_0)|, \\
&\leq \max|T'| \max_{x \in \supp^*(f_0)}|\Phi_{f_0}'(x)||\supp^*(f_0)|, \\
&\leq \Omega  \max_{x \in \supp^*(f_0)}|\Phi_{f_0}'(x)||\supp^*(f_0)|, \\
& \leq \Omega(1-\varepsilon)|\supp^*(f_0)|,
\end{align*}
since $\Phi'_{f_0}(x)=1+\varepsilon\left(f_0(x+1/2)-1 \right)=1-\varepsilon$ for $x \in \supp^*(f_0)$.

Let $q=\Omega(1-\varepsilon)$. By this notation we have
\[
|\supp^*(f_1)| \leq q |\supp^*(f_0)|,
\]
and by iteration we get
\[
|\supp^*(f_n)| \leq q^n |\supp^*(f_0)|,
\]
implying $|\supp^* (f_n)| \underset{n \to \infty}{\to} 0$ if $q<1$, which holds if $1-\frac{1}{\Omega} < \varepsilon$.
\end{proof}

\begin{rem}
We required that the support of $f_0$ should fit in an interval with length less then 1/2. The significance of 1/2 comes from the fact that the distance function $g$ used in the coupling $\Phi$ has singularities at $\pm 1/2$. This results in the fact that $f_n(x+1/2)$ plays a role in $\Phi_{f_n}'(x+1/2)$.

However, singularities in the function $g$ are not vital to this phenomenon. It can be shown for some special continuous functions $g$ that if the support of the initial density $f_0$ is small in terms of some properties of $g$ (bounds on the supremum or the derivative), the supports of the densities $f_n$ shrink exponentially.
\end{rem}
Since the corresponding measures $\mu_n$ are all probability measures, this proposition implies that if the sequence $(\mu_n)$ converges, it can only converge to a Dirac measure supported on some $x^* \in \mathbb{T}$. But typically this is not the case. What happens typically is that the sequence $\mu_n$ is in fact divergent and approaches a Dirac measure moving along the $T-$trajectory of some $x^* \in \supp^*(f_0)$. More precisely:

\begin{prop} \label{wasser}
Suppose $f_0$ is of property (F') and $1-\frac{1}{\Omega} < \varepsilon$. Then there exists an $x^* \in \supp^*(f_0)$ such that
\[
\lim_{n \to \infty}W_1(\mu_n,\delta_{T^n(x^*)})=0,
\]
where $W_1(\cdot,\cdot)$ is the Wasserstein metric.
\end{prop}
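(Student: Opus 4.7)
My approach is to construct $x^*$ via a nested intervals argument using inverse branches of $T$ along the coupled orbit, and then bound the Wasserstein distance by the shrinking diameter of $\supp^*(f_n)$.

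Write $I_n := \supp^*(f_n)$, so that $|I_n| \leq q^n |I_0| < 1/2$ with $q := \Omega(1-\varepsilon) < 1$ by Proposition~\ref{propshrink} and the hypothesis $\varepsilon > 1 - 1/\Omega$. Property (F') is preserved at every step (the complement of $I_n$ always has length $\geq 1/2$), so for $x \in I_n$ one has $g(y-x) = y-x$ whenever $y \in \supp(f_n)$. Hence $\Phi_{f_n}$ acts on $I_n$ as the affine contraction $x \mapsto (1-\varepsilon)x + \varepsilon \bar z_n$, where $\bar z_n := \int y\, f_n(y)\,dy \in I_n$. In particular $W_{n+1} := \Phi_{f_n}(I_n)$ is a sub-arc of $I_n$ of length $(1-\varepsilon)|I_n| < 1/(2\Omega)$, and $T(W_{n+1}) = F_{f_n}(I_n) = I_{n+1}$. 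The smallness of $|W_{n+1}|$, together with $N \leq \Omega$ (since $N = \int |T'|\,d\lambda \leq \Omega$), ensures that $T$ is injective on $W_{n+1}$; let $S_{n+1}: I_{n+1} \to W_{n+1}$ be the local inverse, which is a $(1/\omega)$-contraction.

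With these inverse branches in hand, I would define $V_0 := I_0$ and
\[
V_n := S_1 \circ S_2 \circ \cdots \circ S_n(I_n) \qquad \text{for } n \geq 1.
\]
The composition is well defined because $S_{k+1}(I_{k+1}) = W_{k+1} \subseteq I_k$ lies in the domain of $S_k$. A direct unwinding shows $T^n(V_n) = I_n$ and $V_{n+1} = (S_1 \circ \cdots \circ S_n)(W_{n+1}) \subseteq (S_1 \circ \cdots \circ S_n)(I_n) = V_n$. Since each $S_k$ is $(1/\omega)$-Lipschitz, one has $|V_n| \leq \omega^{-n}|I_n| \leq (q/\omega)^n |I_0| \to 0$, so $\bigcap_n V_n$ is a single point $x^* \in \supp^*(f_0)$, and $T^n(x^*) \in T^n(V_n) = I_n = \supp^*(f_n)$ for every $n$.

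To conclude, both $\supp(\mu_n) \subseteq I_n$ and $T^n(x^*) \in I_n$ lie in an arc of length $<1/2$, on which the circle distance coincides with the Euclidean distance. The unique coupling between $\mu_n$ and the Dirac mass then yields
\[
W_1(\mu_n, \delta_{T^n(x^*)}) = \int d(y, T^n(x^*))\,d\mu_n(y) \leq |I_n| \leq q^n |\supp^*(f_0)| \xrightarrow[n \to \infty]{} 0,
\]
which proves the proposition. The main technical subtlety is verifying the uniform injectivity of $T$ on each $W_{n+1}$ so that the inverse branches $S_k$ are well defined and consistently chosen; this is handled by using $\varepsilon > 1 - 1/\Omega$ to control $|W_{n+1}|$.
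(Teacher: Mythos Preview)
Your argument is correct and mirrors the paper's proof: both show $\Phi_{f_n}(\supp^* f_n)\subset\supp^* f_n$ (the paper states this as a separate lemma handling the two parametrizations of the arc, you derive it from the explicit affine form of $\Phi_{f_n}$ on $I_n$), build $x^*$ as the intersection of nested $T$-preimages of $\supp^* f_n$ along the coupled orbit, and bound $W_1$ by the diameter of $\supp^* f_n$. The only cosmetic differences are that the paper invokes Kantorovich--Rubinstein for the final estimate while you use the unique coupling with a Dirac mass, and that injectivity of $T$ on $W_{n+1}$ is in fact immediate from $|T(W_{n+1})|<1$ together with the monotone lift of the expanding circle map $T$, so the detour through $N\leq\Omega$ is unnecessary.
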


\begin{proof} We start by proving a short lemma.
\begin{lem}
We claim that $\supp^* P_{\Phi_{f}}f \subset \supp^* f$.
\end{lem}
\begin{proof}
Consider the lifted coupling dynamics $\Phi_{f}^{\mathbb{R}}: \mathbb{R} \to \mathbb{R}$ which is defined as
\[
\Phi_{f}^{\mathbb{R}}(x)=x+\varepsilon \int_0^1 g(y-x)f(y)\text{d}y \qquad x \in \mathbb{R}.
\]

We have to study two cases separately. First, let $\supp^*f=[a,b]$, $0 \leq a < b \leq 1$.  In this case, notice that
\[
\int_0^1 yf(y) \text{ d}y=M \in [a,b].
\]
Now
\begin{align*}
\Phi_{f}^{\mathbb{R}}(a)&=a+\varepsilon\int_0^1(y-a)f(y)\text{ d}y =a+\varepsilon(M-a), \\
\Phi_{f}^{\mathbb{R}}(b)&=b+\varepsilon\int_0^1(y-b)f(y)\text{ d}y =b-\varepsilon(b-M).
\end{align*}
Hence $[\Phi_{f}^{\mathbb{R}}(a), \Phi_{f}^{\mathbb{R}}(b)] \subset [a,b]$, and this implies $\supp^* P_{\Phi_{f}}f \subset \supp^* f$.

In the second case, $\supp^*f=[a,1]\cup[0,b]$, $0 < b < a < 1$. Now
\[
\int_0^b (y+1)f(y) \text{ d}y+\int_a^1 yf(y) \text{ d}y=\tilde{M} \in [a,1+b].
\]
On the one hand, we see that
\begin{align*}
\Phi_{f}^{\mathbb{R}}(a)&=a+\varepsilon\left(\int_0^b g(y-a)f(y) \text{ d}y+\int_a^1 g(y-a)f(y) \text{ d}y \right), \\
&= a+\varepsilon\left(\int_0^b (y-a+1)f(y) \text{ d}y+\int_a^1 (y-a)f(y) \text{ d}y \right), \\
&=a+\varepsilon(\tilde{M}-a),
\end{align*}
and
\begin{align*}
\Phi_{f}^{\mathbb{R}}(a)&\leq a+\int_0^b (y-a+1)f(y) \text{ d}y+\int_a^1 (y-a)f(y) \text{ d}y,  \\
&=\tilde{M}.
\end{align*}
Furthermore,
\begin{align*}
\Phi_{f}^{\mathbb{R}}(1+b)&=(1+b)+\varepsilon\left(\int_0^b g(y-(1+b))f(y) \text{ d}y+\int_a^1 g(y-(1+b))f(y) \text{ d}y \right), \\
&=(1+b)+\varepsilon\left(\int_0^b (y-(1+b)+1)f(y) \text{ d}y+\int_a^1 (y-(1+b))f(y) \text{ d}y \right), \\
&=(1+b)-\varepsilon((1+b)-\tilde{M}),
\end{align*}
and
\begin{align*}
\Phi_{f}^{\mathbb{R}}(1+b)&  \geq (1+b)-1 \cdot ((1+b)-\tilde{M})=\tilde{M}.
\end{align*}
Hence $[\Phi_{\mu}^{\mathbb{R}}(a), \Phi_{\mu}^{\mathbb{R}}(1+b)] \subset [a,1+b]$, and this implies $\supp^* P_{\Phi_{f}}f \subset \supp^* f$.
\end{proof}

The lemma implies that $\supp^* f_1=F_{f_0}(\supp^* f_0) \subset T(\supp^* f_0)$. Now the $T$-preimage of $\supp^* f_1$ is a finite collection of intervals; consider its component that coincides with $\Phi_{f_0}(\supp^* f_0)$. By the lemma above, this is a closed interval strictly contained in $\supp^* f_0$ (see Figure \ref{xstar} for an illustration). Arguing similarly for $\supp^* f_1$ and $\supp^* f_2$ and so on, we get a nested sequence of intervals with lengths shrinking to zero, since $|\Phi_{f_n}(\supp^* f_n)| \leq (1-\varepsilon)^n|\supp^* f_0|$ -- as calculated during the proof of Proposition \ref{propshrink}. By taking their intersection we get a point $x^*$ for which $T^n(x^*) \in \supp^* f_n$ holds.

\begin{figure}[h!]
\begin{tikzpicture}
\draw [gray!90] (0,0) circle (2cm);

\draw [thick, gray!120, domain=10:70] plot ({2*cos(\x)}, {2*sin(\x)});
\draw [thick, gray!120] ({1.9*cos(10)}, {1.9*sin(10)})-- ({2.1*cos(10)}, {2.1*sin(10)});
\draw [thick, gray!120] ({1.9*cos(70)}, {1.9*sin(70)})-- ({2.1*cos(70)}, {2.1*sin(70)});
\draw [gray!120] ({3*cos(40)}, {3*sin(40)}) node {$\supp^*f_0$};

\draw [thick,domain=20:40] plot ({2*cos(\x)}, {2*sin(\x)});
\draw [thick] ({1.9*cos(20)}, {1.9*sin(20)})-- ({2.1*cos(20)}, {2.1*sin(20)});
\draw [thick] ({1.9*cos(40)}, {1.9*sin(40)})-- ({2.1*cos(40)}, {2.1*sin(40)});
\draw ({5.2*cos(12)}, {5.2*sin(12)}) node {$T^{-1,1}F_{f_0}(\supp^*f_0)=\Phi_{f_0}(\supp^*f_0)$};

\draw [thick,domain=-20:-40] plot ({2*cos(\x)}, {2*sin(\x)});
\draw [thick] ({1.9*cos(-20)}, {1.9*sin(-20)})-- ({2.1*cos(-20)}, {2.1*sin(-20)});
\draw [thick] ({1.9*cos(-40)}, {1.9*sin(-40)})-- ({2.1*cos(-40)}, {2.1*sin(-40)});
\draw ({3.8*cos(-18)}, {3.8*sin(-18)}) node {$T^{-1,2}F_{f_0}(\supp^*f_0)$};

\draw [thick,domain=-200:-160] plot ({2*cos(\x)}, {2*sin(\x)});
\draw [thick] ({1.9*cos(-200)}, {1.9*sin(-200)})-- ({2.1*cos(-200)}, {2.1*sin(-200)});
\draw [thick] ({1.9*cos(-160)}, {1.9*sin(-160)})-- ({2.1*cos(-160)}, {2.1*sin(-160)});
\draw ({3.8*cos(-180)}, {3.8*sin(-180)}) node {$T^{-1,3}F_{f_0}(\supp^*f_0)$};
\end{tikzpicture}
\caption{Construction of $x^*$, first step.} \label{xstar}
\end{figure}
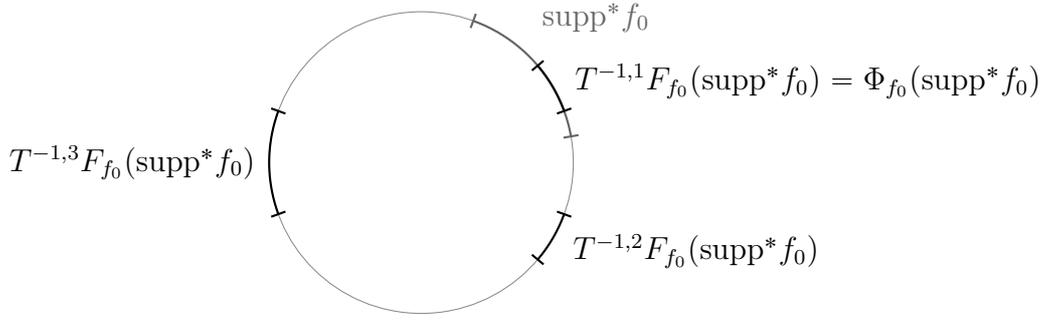

Formally, by using the notation $F_{f_{n-1}}\dots F_{f_0}(\supp^* f_0)=F^n (\supp^* f_0)$,
\[
\{x^*\}=\bigcap_{n=0}^{\infty}T^{-n}F^n(\supp^* f_0).
\]
By the Kantorovich-Rubinstein theorem mentioned in Section \ref{s:results},
\begin{align*}
W_1(\mu_n,\delta_{T^n(x^*)})&=\sup \left|\int_0^1\ell(x)f_n(x) \text{d}x-\ell(T^n(x^*)) \right|,
\end{align*}
where the supremum is taken for all continuous functions $\ell: \mathbb{T} \to \mathbb{R}$ with Lipschitz constant $\leq 1$. Using that $T^n(x^*) \in \supp^* f_n$, we have that
\begin{align*}
\left|\int_0^1\ell(x)f_n(x) \text{d}x-\ell(T^n(x^*)) \right|&=\left|\int_0^1(\ell(x)-\ell(T^n(x^*)) )f_n(x) \text{d}x\right| \leq 1 \cdot |\supp^* f_n| \left| \int_0^1 f_n(x) \text{d}x \right| \\
& \leq [\Omega(1-\varepsilon)]^n.
\end{align*}

This implies $W_1(\mu_n,\delta_{T^n(x^*)}) \to 0$.
\end{proof}

\section{Concluding remarks} \label{s:remarks}

\subsection{Choice of the space $\mathcal{C}_{R,S}^c$}
\label{s:discussion}

As emphasized before, the space $\mathcal{C}_{R,S}^c$ is carefully chosen for the proof to work. In particular, the key statements that quantify the weak-coupling behavior of the transfer operators are Claim~\ref{l1} in the proof of Lemma~\ref{lemma_cont} and Claim~\ref{claim2} in the proof of Proposition~\ref{propconv}. In the proof of Claim~\ref{l1} regularity is not essential, since Lemma~\ref{lemma_cont} is needed in the course of proving the \emph{existence} of the invariant density, and we believe this is true in a much general context. On the other hand, Proposition~\ref{propconv} concerns the \emph{stability} (and in conclusion the \emph{uniqueness}) of the invariant density. This is where the regularity of the densities plays a crucial role, more specifically in the proof of Claim~\ref{claim2}. The uniform bound on the total variation and the derivative of the densities is needed recurrently, while the uniform Lipschitz continuity of the derivative of the densities is essential in bounding the term (C2). This explains the definition of $\mathcal{C}_{R,S}^c$. In turn, we need to assume the smoothness of $T$ to ensure that $P_T$ preserves $\mathcal{C}_{R,S}^c$.

\subsection{Relation to other works and open problems}

The goal of this paper was to show that the results of \cite{selley2016mean} on stability and synchronization in an infinite system can be generalized to a wider class of coupled map systems. The concept was to consider the widest class of systems possible, but for the sake of clarity and brevity we refrained from some superficial generalizations which pose no mathematical complications. For example, $g|_{(-1/2,1/2)}$ need not be the identity, a very similar version of our results is most likely to hold when $g|_{(-1/2,1/2)}$ has sufficient smoothness properties and bounds on the derivative. On the other hand, our result is somewhat less explicit then the stability result obtained in \cite{selley2016mean}, since we do not have an expression for $f_*^{\varepsilon}$, we only managed to show that it is of order $\varepsilon$ distance from $f_*^0$. A special case when it can be made explicit is when the unique acim of $T$ is Lebesgue. Actually in this case the proof in \cite{selley2016mean} can be applied with minor modifications showing that the constant density is a stable invariant distribution of the coupled map system for sufficiently small coupling strength.

In the introduction we remarked that if we choose the initial measure to be an average of $N$ point masses, we get a coupled map system of finite size. This can more conveniently be represented by a dynamical system on $\mathbb{T}^N=\mathbb{T}\times \dots \times \mathbb{T}$ with piecewise $C^2$ dynamics (the specific smoothness depending on the smoothness of $T$). Our papers \cite{selley2016mean} and \cite{selley2016} suggest that the analysis in the case of finite system size is particularly complicated and necessitates a thorough geometrical understanding. A direct consequence of this is that little can be proved when the dimension is large. If for example $g$ were a smooth function on $\mathbb{T}$, it could be shown that there exists a unique absolutely continuous invariant measure $\mu_{N}^{\varepsilon}$ for any system size $N$, once $\varepsilon$ is smaller then some $\varepsilon^*$ which does not depend on $N$. Simulations suggest that this is likely to be the case also when $g$ is defined by \eqref{gg}. If this could be verified, one could even aim to prove an analogue of part 1) of Theorem 3 in \cite{bardet2009stochastically}. Namely, along the same lines as in \cite{bardet2009stochastically}, one could show that for sufficiently small $\varepsilon$, the sequence $\mu_{N}^{\varepsilon} \circ \epsilon_N^{-1}$ converges weakly to $\delta_{\mu_*^{\varepsilon}}$, where
\[
\epsilon_N(x_1,\dots,x_N)=\frac{1}{N}\sum_{i=1}^N \delta_{x_i},
\]
and $\mu_*^{\varepsilon}$ is the unique invariant measure for the infinite system with coupling strength $\varepsilon$.

Finally, in our opinion the truly interesting question is the spectrum of possible limit behaviors in our class of systems. Are stability and synchronization, as shown in this paper to exist, the only possibilities? For example, one can easily imagine that for stronger coupling than the one producing stability, multiple locally attracting or repelling fixed densities can arise -- as seen in the case of the model introduced in \cite{bardet2009stochastically}. However, one would have to develop completely new analytical tools to prove such statements in our class of models. So on this end there is plenty of room for innovation.

\section*{Acknowledgements}
A large part of this work was done during the stay of FMS at the Department of Mathematics of the University of Erlangen-Nuremberg. The hospitality of the Department, along with the support of the DAAD is thankfully acknowledged. The research of PB, FMS and IPT was supported in part by Hungarian National Foundation for Scientific Research (NKFIH OTKA) grants K104745 and K123782.

\newpage

\bibliographystyle{plain}
%\bibliography{references}

\end{document}